\documentclass{article}

\usepackage[english]{babel}
\usepackage{amsmath}
\usepackage{amsthm}
\usepackage{amssymb}
\usepackage{braket}
\usepackage{enumitem}
\usepackage{etoolbox}
\usepackage{graphicx}
\usepackage{hyperref}
\usepackage[capitalize]{cleveref}
\usepackage{ifthen}
\usepackage{interval}
\usepackage{mathtools}
\usepackage{mleftright}
\usepackage{thmtools}
\usepackage{thm-restate}
\usepackage{xcolor}
\usepackage{xargs}

\title{A connection between the Mahler conjecture and floating bodies}
\author{Agam Guberman\thanks{Tel Aviv University, Israel. Supported by the Israel Science Foundation (grant 1626/25).}}
\declaretheorem[within=section]{definition}
\declaretheorem[numberlike=definition]{conjecture, fact, theorem, lemma, proposition, remark, corollary, example}



\declaretheorem[sibling=claim, numbered=no, name=Claim]{claim*}
\AtBeginEnvironment{proof}{\setcounter{claim}{0}}
\newenvironment{claimproof}[1][]
{\par\noindent\ifthenelse{\equal{#1}{}}{\textit{Proof.}\space}{\textit{Proof of #1.}}}
{\hfill$\square$\newline}

\intervalconfig{soft open fences}

\newcommand{\parens}[1]{\mleft(#1\mright)}

\newcommand{\curlies}[1]{\mleft\{#1\mright\}}
\newcommand{\triangles}[1]{\mleft\langle#1\mright\rangle}
\newcommand{\abs}[1]{\mleft|#1\mright|}
\newcommand{\size}[1]{\#\mleft(#1\mright)}

\newcommand{\restrict}[1]{\left.#1\right|}

\newcommand{\vol}[2][]{\ifthenelse{ \equal{#1} {} }
    {\mathrm{Vol}\parens{#2}}
    {\mathrm{Vol}_{#1}\parens{#2}}
}
\newcommand{\Mahlervol}[1]{\mathcal{P}\parens{#1}}
\newcommand{\mahlervol}[2][]{\vol[#1]{#2}\cdot\vol[#1]{#2^\circ}}

\newcommand{\Conv}[1]{\mathrm{Conv}\parens{#1}}

\newcommand{\interior}[1]{\mathrm{int}\parens{#1}}

\newcommand{\aff}[1]{\mathrm{aff}\parens{#1}}

\newcommandx{\Poly}[3][1=,2=,3=]{
\ifthenelse{ \equal{#1} {} } {\mathcal{P}}
{
    \ifthenelse{ \equal{#2} {} } {\mathcal{P}^{#1}}
    {
        \ifthenelse{ \equal{#3} {} }
        {\mathcal{P}^{#2}_{\mathrm{#1}}}
        {\mathcal{P}^{#3}_{{#1}, \mathrm{#2}}}
        }
    }
}

\newcommand{\R}{\mathbb{R}}
\newcommand{\N}{\mathbb{N}}

\newcommand{\eps}{\varepsilon}
\newcommand{\defeq}{\vcentcolon=}

\begin{document}

\maketitle

\begin{abstract}
    In this paper, we will prove a result about the structure of polytopes whose floating bodies are not smooth.
    This will then be used to prove a result about the combinatorial structure of minimizers of the volume product.
\end{abstract}

\newpage

\section{Introduction}\label{sec:introduction}
Ever since Mahler's conjecture was first stated in 1939~\cite{Mahler1939}, the volume product $\mahlervol{K}$ has been of great interest in the field of convex geometry.
In this paper, we will use a result by Meyer and Reisner~\cite{MR2008} about the volume product to prove a new result about floating bodies.
We will then use this new result about floating bodies to prove a new result about the combinatorial structure of minimizers of the volume product.
The structure of the paper is as follows.
In this section, we state known results on the Mahler conjecture and about floating bodies,
together with the main results of this paper, \cref{thm:main-mahler} and \cref{thm:main-floating-body}.
In \cref{sec:preliminaries}, we state preliminary facts required for our main results.
The two main results of \cref{sec:preliminaries} are \cref{thm:polytope-perturbations} and \cref{cor:symmetric-single-vertex-movement-implies-affine-volume}.
Finally, we end by proving our two main results of the paper in \cref{sec:main-results}.

\subsection{Mahler's Conjecture}

Whenever we speak of a \textbf{convex body} in this paper, we mean a compact convex set $K \subset \R^n$ for some $n \ge 1$ with non-empty interior.
For any convex body $K$, its volume product (or Mahler product) is defined as
\[
    \Mahlervol{K} = \min_{z \in \interior{K}} \vol{K} \cdot \vol{\parens{K - z}^\circ}.
\]
Here, as usual, the dual of a compact convex set $K$ containing $0$ in its interior is defined as
\[
    K^\circ \defeq \Set{y \in \R^n | \forall x \in K: \triangles{x, y} \le 1}.
\]
The unique $z \in \R^n$ attaining the minimum is called the Santal\'o point of $K$.
When $K$ is symmetric\footnote{In this paper, a convex body described as symmetric is understood as being centrally symmetric, i.e. $K = -K \defeq \Set{-x | x \in K}$.}, this point is $0$,
and the definition of the volume product simplifies to $\Mahlervol{K} = \mahlervol{K}$.
The Mahler product is an affine invariant, and thus a natural question is to ask which convex bodies minimize this value, and which maximize it.
The maximum is known to be attained if and only if $K$ is an ellipsoid. This is called the Blaschke-Santal\'o inequality, see~\cite{Blaschke1917} and~\cite{Santalo1949}.
The Mahler conjecture and its symmetric variant, which are still open in general, state which bodies attain the minimum.

\begin{conjecture}[Mahler Conjecture]
    Let $K \subset \R^n$ be a convex body. Then
    \[
        \Mahlervol{K} \ge \frac{{(n + 1)}^{n + 1}}{{(n!)}^2}
    \]
    with equality only when $K$ is an $n$-dimensional simplex.
\end{conjecture}

\begin{conjecture}[Symmetric Mahler Conjecture]
    Let $K \subset \R^n$ be a symmetric convex body. Then
    \[
        \Mahlervol{K} \ge \frac{4^n}{n!}
    \]
    with equality only when $K$ is an invertible linear transformation of a Hanner polytope\footnote{
        Hanner polytopes are the polytopes generated with the operations $(P, Q) \mapsto P \times Q$ and $P \mapsto Q^\circ$,
        beginning with the line segment $[-1, 1] \subset \R$.
        See for example~\cite{Kim2014} for more details.
        For the sake of this paper, it is sufficient to recall that the cube ${[-1, 1]}^n$ is a Hanner polytope.
    }.
\end{conjecture}

In 1939, Mahler~\cite{Mahler1939} proved the 2-dimensional case of both versions of the conjecture,
additionally proving the equality case when $K$ is a polytope.
In 1986, Reisner~\cite{Reisner1986} proved the symmetric conjecture when $K$ is a zonoid,
and as a corollary proved the equality case of the symmetric conjecture in two dimensions.
In 1986 and 1987, Meyer~\cite{Meyer1986} and Reisner~\cite{Reisner1986} separately proved the symmetric conjecture on the class of unconditional convex bodies.
In 1987, Bourgain and Milman~\cite{BM1987} proved the lower bounds up to an exponential factor.
In 1991, Meyer~\cite{Meyer1991} proved the equality case of the asymmetric conjecture in two dimensions. 
In 2010, Reisner, Schütt, and Werner~\cite{RSW2010} proved that the minimizers must have a
Gaussian curvature of $0$ at every point where it is defined, suggesting that the minimizers are polytopes.
In 2011, Kim and Reisner~\cite{KR2011} proved the simplex is a local minimum of the volume product,
and in 2014, Kim~\cite{Kim2014} proved that Hanner polytopes are local minima of the volume product within the class of symmetric convex bodies.
In 2017, Iriyeh and Shibata proved the 3-dimensional symmetric conjecture~\cite{IS2017}.
See~\cite{FHMRZ2022} for a simplified version of their proof.
In May 2026, the 3-dimensional conjectures (both the symmetric and non-symmetric cases) were proved by Chen, Li, Xi, and Xu~\cite{CLXX2026}.
In this paper we will prove the following result on the combinatorial structure of the minimizers of the Mahler conjecture.

\begin{restatable}{theorem}{mainmahler}\label{thm:main-mahler}
    Let $l \in \N$, and let $K \subset \R^n$ be a symmetric polytope that is a local minimizer of the Mahler product among all symmetric polytopes in $\R^n$ with at most $l$ vertices.
    Then one of two cases occurs:
    
    \begin{enumerate}[label=\textit{(\roman*)}]
        \item Every facet of $K$ contains at least $n - 1$ vertices of degree at least $n + 1$ (degree is counted with respect to the 1-skeleton of $K$).
        \item $K$ is a linear transformation of the unit cube, $K = T\parens{{[-1, 1]}^n}$.
    \end{enumerate}
\end{restatable}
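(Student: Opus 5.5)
The plan is to use the Meyer--Reisner shadow-system argument: moving a single vertex pair (together with its antipode) along a line gives a volume product that depends convexly on the parameter. First I fix the setup. Suppose $K$ is a local minimizer among symmetric polytopes with at most $l$ vertices. Let $\pm v$ be a pair of opposite vertices, and for a direction $\theta$ let $K_t = \Conv{(V(K)\setminus\{\pm v\}) \cup \{\pm(v+t\theta)\}}$. Meyer and Reisner~\cite{MR2008} show that $t\mapsto \vol{K_t^\circ}^{-1}$ is convex. For small $t$ the combinatorics of $K_t$ is fixed, so $\vol{K_t}$ is affine in $t$ provided $\theta$ is chosen appropriately. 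I expect this is what \cref{cor:symmetric-single-vertex-movement-implies-affine-volume} packages. It should say that if $v$ can be moved symmetrically while the volume of $K_t$ stays affine (and the polytope stays combinatorially equivalent), then the volume product is affine in $t$ near $0$; in the strictly convex case $K$ would not be a local minimum. So at a local minimizer every such movement must keep $\mathcal P(K_t)$ constant.

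Next, take a facet $F$ of $K$ and suppose it contains at most $n-2$ vertices of degree $\ge n+1$. Then $F$ contains at least two vertices that are simple (degree exactly $n$) and not in that exceptional set. For a simple vertex $v$, a small movement in any direction preserves the combinatorial type, so $\vol{K_t}$ is polynomial in $t$. Moreover, moving $v$ within a suitable affine subspace changes the volume affinely. In particular, moving $v$ parallel to a facet hyperplane leaves the cone over that facet from the other vertices unchanged in the relevant sense. Using \cref{thm:polytope-perturbations}, I would obtain a family of admissible perturbations of all vertices of $F$ of low degree along directions in $\aff{F}$. These perturbations keep the volume product constant to first order and, by the equality case of the convexity, exactly along lines.

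The key step is the equality analysis. Constancy of $\mathcal P(K_t)$ along all these lines forces the shadow system to be degenerate: equality in Meyer--Reisner occurs only when $K_t^\circ$ is affinely image-equivalent, i.e.\ $K_t = A_t K$ for linear maps $A_t$. So each admissible movement of a simple vertex of $F$ must be realized by a linear map fixing all other vertices up to symmetry. A linear map fixing the remaining vertices (which span $\R^n$ unless there are few of them) is the identity. Therefore the vertex set outside $\pm v$ must be small: it spans only a hyperplane, and the movement is a shear. Pushing this for every facet, I would show that all vertices are simple and every facet has exactly $2^{n-1}$ vertices arranged as a parallelepiped. The conclusion is then that $K$ is a parallelepiped, i.e.\ $T([-1,1]^n)$.

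The main obstacle is this rigidity/equality step: converting "volume product constant along every admissible single-vertex movement" into the global conclusion that $K$ is a linear cube. My first attempt would be induction on facets. I would show that $F$ itself, viewed inside $\aff{F}$, has all simple vertices movable, hence by induction is an $(n-1)$-parallelepiped. Adjacent facets sharing ridges would then force the global parallelepiped structure.
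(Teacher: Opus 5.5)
Your proposal has a genuine gap: you perturb the wrong object. Your hypothesis is about a facet $F$ and the degrees of its vertices, but you move vertices $\pm v$ of $K$. Two things then go wrong.

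First, the claim that a simple vertex can be moved in any direction without changing the face lattice is false. Every vertex of $[-1,1]^3$ is simple, yet moving one off any of its three facet planes breaks a square facet. What a vertex move must preserve is the flatness of the non-simplex facets through $v$, and your hypothesis says nothing about those.

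Second, suppose you did have an admissible move of $\pm v$ along which the volume product is constant. Then \cref{prop:shadow-system-equality-case}, with all other vertices fixed, gives only that those vertices lie in a hyperplane through $0$, i.e.\ $K$ is a bipyramid with apices $\pm v$. That is the polar of a prism, and it points toward the cross-polytope, not the cube. Nothing supports your step to ``all vertices simple, every facet a parallelepiped'', and the ``induction on facets'' is not an argument. Also, \cref{cor:symmetric-single-vertex-movement-implies-affine-volume} says the \emph{volume}, not the volume product, is affine under such moves.

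The missing idea is to dualize and move the vertex pair $\pm F^\diamond$ of $K^\circ$. The non-simplex facets of $K^\circ$ through $F^\diamond$ are exactly $v^\diamond$ for those $v\in V(F)$ with $\deg_K(v)\ge n+1$. So by \cref{thm:polytope-perturbations}, moving $F^\diamond$ inside $W=\bigcap v^\circ$ (over these $v$) preserves the face lattice. Since there are at most $n-2$ such $v$, we get $\dim W\ge 2$; this is precisely where the hypothesis enters.

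By \cref{cor:symmetric-single-vertex-movement-implies-affine-volume}, $u\mapsto \vol{K_u^\circ}$ is affine near $F^\diamond$ in $W$, so some line through $F^\diamond$ in $W$ keeps it constant. Along that line $\vol{K_u}^{-1}$ is convex, by the shadow-system inequality you quoted. By minimality it has a local maximum at $F^\diamond$, so it is constant near $F^\diamond$. \cref{prop:shadow-system-equality-case} then forces the fixed vertices of $K^\circ$ into a hyperplane through $0$. Hence $K^\circ$ is a bipyramid with apices $\pm F^\diamond$, and $K=T(L\times[-1,1])$ is a prism with bases $\pm F$.

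The paper packages this step through floating bodies. \cref{lem:minimum-implies-floating-singularity} shows the hyperplanes $u^\circ$ all support $(K_\eps)_{[\delta]}$ at the barycenter of $F$, and \cref{thm:main-floating-body} turns that singularity into the prism structure.

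You would still need the induction on the \emph{dimension}. $L$ is a local minimizer in $\R^{n-1}$, and $\deg_L(v)=\deg_K(w)-1$ for $w=T(v,1)$. A facet of $L$ avoiding one vertex of degree $\ge n$ has at most $n-3$ such vertices, so by induction $L$, and hence $K$, is a linear image of the cube.
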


We remark that although it is unclear whether a minimizer of the Mahler conjecture must be a polytope,
among the set of all polytopes with at most $l$ vertices (for any $l$), there must be a minimizer,
and thus the above theorem is not an empty statement.
This result will come as a direct consequence of another result we will prove in this paper, about floating bodies.

\subsection{Floating Bodies}

The floating body is a construction that, given a convex body $K \subset \R^n$ and a parameter $0 < \delta < \frac{1}{2}$, produces a new convex body $K_{[\delta]} \subset K$
that is obtained from cutting off all possible ``caps'' of $K$ of volume $\delta \cdot \vol{K}$, namely
\[
    K_{[\delta]} \defeq \bigcap \Set{A \subset \R^n | \substack{\text{$A$ is a halfspace} \\ \vol{K \setminus A} = \delta \cdot \vol{K}}}.
\]


The floating body was first introduced by Dupin in 1822~\cite{Dupin1822}.
A case of interest is when all caps in the definition of $K_{[\delta]}$ also support it.
This is known to occur in the case when $K$ is smooth and $\delta$ is sufficiently small~\cite{Leichtweiss1986},
and also when $K$ is symmetric, for any $\delta$~\cite{Werner2005}.
It is known that the shape of a floating body is strictly convex~\cite{Werner2005}.
Our second result in this paper characterizes the shape of a polytope when its floating body is not smooth at some point.

\begin{restatable}{theorem}{mainfloatingbody}\label{thm:main-floating-body}
    Let $K \subset \R^n$ be a symmetric polytope, let $0 < \delta < \frac{1}{2}$, and consider the floating body $K_{[\delta]}$.
    Assume there exists a point $x \in \partial{K_{[\delta]}}$ at which $K_{[\delta]}$ admits at least two supporting hyperplanes.
    Let $H$ be a supporting hyperplane of $K_{[\delta]}$ at $x$.

    Then $K \cap \Conv{H, -H}$ is a linear transformation of a prism.
    That is, there is a symmetric polytope $L \subset \R^{n-1}$ together with an invertible linear transformation $T: \R^n \rightarrow \R^n$ such that
    \[
        K \cap \Conv{H, -H} = T(L \times [-1, 1]).
    \]
\end{restatable}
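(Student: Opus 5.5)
We plan to reduce the statement to a one-parameter family of hyperplanes through $x$ and exploit that, for a symmetric body, every $\delta$-cap hyperplane supports $K_{[\delta]}$ \cite{Werner2005}. Since $K_{[\delta]}$ has two distinct supporting hyperplanes at $x$, it has a whole pencil of them: rotating the given $H$ about an $(n-2)$-dimensional affine subspace $E \subset H$ through $x$, we obtain hyperplanes $H_t$, $t \in [0,\eps)$, all supporting $K_{[\delta]}$ at $x$. We will first argue that each such $H_t$ cuts a cap of volume exactly $\delta\vol{K}$. Otherwise some supporting hyperplane would cut strictly more or less. We intend to derive a contradiction from the characterization of $K_{[\delta]}$ as the intersection of the $\delta$-caps together with the fact that $x$ lies on some $\delta$-cap hyperplane. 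The symmetry of $K$ guarantees that the minimal cap volume through $x$ is attained at the supporting directions. The conclusion is that the function $t \mapsto \vol{K \cap H_t^-}$ is constant on the pencil.

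Next we differentiate this constancy. The standard formula for the derivative of cap volume under rotation about $E$ gives
\[
\frac{d}{dt}\vol{K\cap H_t^-} \;=\; \int_{K\cap H_t} \operatorname{dist}_{\pm}(y, E)\,dy,
\]
so the signed first moment of the section $K\cap H_t$ about $E$ vanishes for every $t$. Equivalently, the centroid of each section $K\cap H_t$ lies in $E$. Combining this with the symmetric partner $-H_t$ and the slab $\Conv{H_t,-H_t}$, we obtain that the moments of the sections of $K$ in the whole family of parallel slabs are controlled. Here we plan to use the Meyer--Reisner result \cite{MR2008} (shadow systems / parallel sections) in the form recorded in \cref{thm:polytope-perturbations} and \cref{cor:symmetric-single-vertex-movement-implies-affine-volume}. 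The idea is that the slab $K\cap \Conv{H,-H}$ admits a shadow movement, namely shearing along the direction determined by the pencil, under which a volume functional is affine rather than strictly convex. The equality case of those results then forces rigidity.

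The final step converts this equality into the prism structure. We intend to show that the shear direction $v$ (transversal to $H$) is such that every chord of $K\cap\Conv{H,-H}$ parallel to $v$ runs from $H$ to $-H$. That is, the side boundary of the slab consists of faces containing $v$, so the slab equals $(K\cap H) + [0,1]\cdot(\text{translation to } -H)$ up to symmetry. Then setting $L$ as the linear image of $K\cap H$ centered appropriately and choosing $T$ sending $e_n$ to half the translation vector yields $K\cap\Conv{H,-H}=T(L\times[-1,1])$. We expect the main obstacle to be the middle step. It requires making precise that non-smoothness at $x$ forces the cap volume to be constant along a \emph{genuine} one-parameter family, and then matching this constancy exactly to the equality hypothesis of \cref{cor:symmetric-single-vertex-movement-implies-affine-volume}, so that the polytope structure of $K$, rather than just moment conditions, is pinned down. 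Our first attempt there is to localize to the vertices of $K$ strictly inside the slab and show each can be moved along $v$ without changing volume, which forces their edges to be parallel to $v$.
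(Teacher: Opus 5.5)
You have a genuine gap: your first step is correct, but nothing after it gets from the hypothesis to the prism structure.

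\textbf{What is sound.} Your first step matches the paper's starting point. Take $H_1$ a second supporting hyperplane at $x$ and $E=H\cap H_1$. Every hyperplane of the pencil between $H$ and $H_1$ supports $K_{[\delta]}$ at $x$. Since $K$ is symmetric, each of them cuts off exactly $\delta\vol{K}$ \cite{Werner2005}.

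\textbf{Where it breaks down.} The vanishing first moment (centroid of $K\cap H_t$ lying in $E$) is only the infinitesimal form of this constancy. By \cite{Werner2005}, the centroid of every such section is its contact point with $K_{[\delta]}$. So it restates the hypothesis rather than making progress, and ``the moments of the sections are controlled'' never becomes a statement with structural content.

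You also misidentify the tool. \cref{thm:polytope-perturbations} and \cref{cor:symmetric-single-vertex-movement-implies-affine-volume} are not forms of the Meyer--Reisner result and have no equality case. The rigidity comes from \cref{prop:shadow-system-equality-case}. That result needs a shadow system $P_t$ for which \emph{both} $t\mapsto\vol{P_t}$ and $t\mapsto\vol{P_t^\circ}^{-1}$ are affine. You never exhibit one. The bodies $K\cap\Conv{H_t,-H_t}$ themselves do not form a shadow system; the cut points move along edges of $K$ in different directions. A linear shear of the slab only produces linear images of it, about which the equality case says nothing.

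\textbf{The missing idea: pass to the dual.} There the pencil becomes a segment. Its hyperplanes are $\gamma(t)^\circ$ with $\gamma(t)=(1-t)H^\circ+tH_1^\circ$. Then $P_t=\Conv{K^\circ\cup\{\pm\gamma(t)\}}$ is a genuine shadow system: only $\pm\gamma(t)$ move, parallel to $H_1^\circ-H^\circ$. Its polar is $P_t^\circ=K\cap(\gamma(t)^\circ)^-\cap(-\gamma(t)^\circ)^-$.
\begin{itemize}
\item Your first step gives $\vol{P_t^\circ}=(1-2\delta)\vol{K}$, so $\vol{P_t^\circ}^{-1}$ is affine.
\item \cref{cor:symmetric-single-vertex-movement-implies-affine-volume} makes $\vol{P_t}$ affine wherever $L(P_t)$ is constant. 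This is where the polytope hypothesis enters.
\item There are finitely many face lattices, so both functionals are affine on some $[0,r]$. \cref{prop:shadow-system-equality-case} then gives $P_t=A_t(P_{r/2})$ with the explicit maps $A_t$.
\end{itemize}
The prism then follows by polarity. The unmoved vertices of $P_t$ are fixed by $A_t$, so they lie in a single hyperplane $N$, which passes through $0$ by symmetry. Hence every side facet of $K\cap\Conv{H,-H}$ has its normal in $N$ and contains the direction $N^\perp$.

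\textbf{Why your last step cannot replace this.}
\begin{itemize}
\item Volume invariance under moving one vertex along $v$ does not force the edges at that vertex to be parallel to $v$. For example, move a vertex of a simplex parallel to the opposite facet. You have not established such invariance in any case.
\item Controlling only the vertices of $K$ in the open slab could not suffice. For the cross-polytope in $\R^3$ with $H$ a facet hyperplane, there are no such vertices. Yet $K\cap\Conv{H,-H}=K$ is an antiprism, not a prism.
\end{itemize}
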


\section{Preliminary facts}\label{sec:preliminaries}
\subsection{Duality}\label{sec:duality}
This paper will use a few different notions of duality.
The first, and most common, is a notion of duality of convex bodies.
When $K \subset \R^n$ is a convex body containing $0$ in its interior, the dual body of $K$, sometimes called its ``polar body'', is defined as
\[
    K^\circ \defeq \Set{y \in \R^n | \forall x \in K: \triangles{x, y} \le 1}.
\]

The second notion of duality is a duality between affine spaces.
Whenever $L \subset \R^n$ is an affine space not containing $0$, we define the dual affine space of $L$ as
\[
    L^\circ \defeq \Set{y \in \R^n | \forall x \in L: \triangles{x, y} = 1}.
\]
When $0 \ne x \in \R^n$, we may for convenience denote by $x^\circ$ the dual of the affine $0$-dimensional space $\curlies{x}$.
For any closed convex set $K \subset \R^n$ and a closed set $L \subset \R^n$, we say that $L$ supports $K$ if it intersects $\partial{K}$ but does not intersect $\interior{K}$.
Then it follows directly from the two definitions of duality that for any convex body $K \subset \R^n$ and affine space $L \subset \R^n$ for which their duals are defined,
$L$ supports $K$ if and only if $L^\circ$ supports $K^\circ$.

While the symbols for $K^\circ$ and $L^\circ$ are the same,
it will always be clear from context which duality is being used,
as there is no set for which both duals are defined.

This paper will also consider a third notion of duality, between facets of polytopes.
Let $P \subset \R^n$ be a polytope with $0 \in \interior{P}$, and $F \subset P$ be a face.
Then the dual face of $F$ is defined as
\[
    F^\diamond \defeq \aff{F}^\circ \cap P^\circ
\]
where $\aff{F}$ is the affine span of $F$.
Then $F^\diamond$ is a face of $P^\circ$, and it satisfies $\dim{F} + \dim{F^\diamond} = n - 1$.

Note that the dual face of $F$ depends on the polytope $P$ as well as the face $F$.
That is, if there are two polytopes $P_1, P_2 \subset \R^n$ such that $F$ is a face of both,
then $F^\diamond$ may be different for $P_1$ and $P_2$.
Thus the operation ${(\cdot)}^\diamond$ is not well-defined,
but in this paper it will always be clear from context which polytope is being used.

\subsection{Perturbations of polytopes}\label{sec:perturbations}
The purpose of this section is to find sufficient conditions under which perturbing a polytope keeps its combinatorial structure.
To do this, we must define the \textit{face lattice} of a polytope.
We will recall some definitions and results, and for more details, see for example~\cite[Sections 2.2-2.3]{Zie1995}.

Let $K \subset \R^n$ be a polytope (not necessarily full-dimensional).
Any bijection $v: X \rightarrow V(K)$ is called a \textbf{labelling} of $K$.
For any $i \in X$, instead of writing $v(i)$ for the vertex, we will usually write $v_i$.
Thus,
\[
    V(K) = \Set{v_i | i \in X}
\]
For convenience we will usually have $X = [m] \defeq \curlies{1, 2, 3, \dots, m}$, and write
\[
    V(K) = (v_1, v_2, \dots, v_m).
\]
When $K$ has a labelling $v: X \rightarrow V(K)$, we say that ``$K$ is labelled over $X$'', and for any $I \subset X$, denote
\[
    K[I] = \Conv{\Set{v_i | i \in I}}.
\]
Given $K$ and $v$, we may then define:
\[
    L(K) = \Set{I \subset X | K[I] \text{ is a face of } K}
\]
where in this context, ``is a face'' includes the empty face and the full polytope.
The set $L(K)$ is a lattice, partially ordered by inclusion.
It is called the \textbf{face lattice} of $K$, and we say that $L(K)$ is labelled over $X$.
Note that the lattice $L(K)$ depends on the labelling $v$, but in this paper the labelling used will always be obvious from context.
When $K[I]$ is a face of $K$, sometimes we will associate $K[I]$ with $I$, and call $I$ a face of $K$.

The dimension of a face is encoded in the face lattice, as any maximal chain in $L(K)$ is of length $\dim{K} + 1$,
and the dimension depends only on the position of the face in the chain.
Therefore $L(K)$ encodes the dimension of $K$. Given a lattice $L$ that can be the face lattice of a polytope,
we may say that $L$ is an $n$-dimensional face lattice, writing $\dim L = n$, when $n$ is the dimension of the polytopes whose face lattice is $L$.
For any $i \in \curlies{0, 1, 2, \dots, n}$, denote by ${L(K)}^i$ the $i$-dimensional part of the lattice,
i.e.\ the subset of $L(K)$ that are $i$-dimensional faces. For example, ${L(K)}^{n-1}$ is the set of all facets of $K$.
For any $i \in \curlies{0, 1, 2, \dots, n}$, we define the $i$-skeleton of $K$ to be the union of all its $i$-dimensional faces, namely
\[
    \bigcup_{A \in {L(K)}^i} K[A].
\]
Because the dimensions of facets are encoded in the face lattice, a face lattice $L$ does not depend on a polytope $K$ to define its $i$-dimensional part,
which will be denoted by $L^i$.
Finally, for any face lattice $L$ labelled over $X$,
we denote by $\Poly[n](L)$ the set of all polytopes in $\R^n$ labelled over $X$ whose face lattice is equal to $L$.
Note that $\dim L \le n$, but equality is not necessary. If $\dim L = n$, then any $K \in \Poly[n](L)$ has nonempty interior.
Additionally, we will use the notation $\Poly(L) = \bigcup_{n \in \N} \Poly[n](L)$.

For any lattice $L$ labelled over $X$ and a subset $Y \subset X$, we define the restricted lattice
\[
    \restrict{L}_Y \defeq \Set{A \in L | A \subset Y}.
\]
When $Y \in L$, the lattice $\restrict{L}_Y$ is a face lattice,
and any polytope $K \in \Poly[n](L)$ satisfies $K[Y] \in \Poly[n](\restrict{L}_Y)$.

It is well-known that face lattices behave well with duality. Any polytope $P$ containing $0$ in its interior
induces an isomorphism $L(P^\circ) \cong {L(P)}^{op}$, where ${(\cdot)}^{op}$ is the operation of swapping the join and meet operations of the lattice.
The isomorphism behaves well with the ${(\cdot)}^\diamond$ operation, in the sense that if $A \in L(P)$ is the face that matches $B \in L(P^\circ)$ under this isomorphism,
then ${(P[A])}^\diamond = (P^\circ)[B]$.
For details, see~\cite[Corollaries 2.13-2.14]{Zie1995}.

\begin{lemma}\label{lem:subset-of-face-indices}
    Let $K$ be a polytope labelled over $X$, and let $I \subset X$.
    Then $K[I] \subset \partial{K}$ if and only if there exists $J \in L(K)$ with $I \subset J$.
\end{lemma}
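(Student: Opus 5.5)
The plan is to prove the two directions separately, using only two standard facts: every proper face of a polytope lies in $\partial K$, and every supporting hyperplane of $K$ cuts out a face.

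\textbf{The ``if'' direction.} Suppose $J \in L(K)$ and $I \subset J$. Then
\[
    K[I] = \Conv{\Set{v_i | i \in I}} \subset \Conv{\Set{v_j | j \in J}} = K[J].
\]
If $K[J]$ is a proper face, it is contained in a supporting hyperplane of $K$, so $K[J] \subset \partial K$, and hence $K[I] \subset \partial K$.

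One caveat has to be recorded here. By the paper's convention $L(K)$ contains the full polytope $X$, and for $J = X$ the implication fails in general. So $J$ must be read as ranging over the \emph{proper} faces, i.e.\ $J \ne X$ when $K$ is full-dimensional. In the lower-dimensional case, $\partial K$ should be read as the relative boundary, and the argument goes through verbatim with relative notions. The empty case $I = \emptyset$ is trivial in both directions, witnessed by $J = \emptyset$.

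\textbf{The ``only if'' direction.} Assume $K[I] \subset \partial K$ with $I \neq \emptyset$, and let $p$ be the barycenter $\frac{1}{|I|}\sum_{i \in I} v_i$. This point lies in the relative interior of $K[I]$, and $p \in \partial K$.

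1. Since $p \in \partial K$, by the supporting hyperplane theorem there is a hyperplane $H = \{\langle a, \cdot\rangle = b\}$ with $K \subset \{\langle a,\cdot\rangle \le b\}$ and $\langle a, p\rangle = b$. In the relative setting, $a$ is chosen nonconstant on $\aff{K}$.

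2. Each $v_i$ with $i \in I$ satisfies $\langle a, v_i\rangle \le b$. Their average equals $b$, so every inequality must be an equality, and all $v_i$ with $i \in I$ lie in $H$.

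3. $F = K \cap H$ is a face of $K$, and it is proper because $a$ is nonconstant on $K$. Every face is the convex hull of the vertices it contains, so $F = K[J]$ with $J = \Set{j | v_j \in H}$. Thus $J \in L(K)$, and $I \subset J$ by the previous step.

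\textbf{Main obstacle.} There is no real technical difficulty. The only delicate points are the interpretation issues already noted: excluding $J = X$ in the ``if'' direction, and handling non-full-dimensional $K$ via the relative boundary.
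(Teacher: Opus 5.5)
Your proof is correct and follows essentially the same hyperplane argument as the paper: the paper separates the convex set $K[I]$ from $\interior{K}$ directly, while you support $K$ at the barycenter and use averaging to place all of $K[I]$ in the hyperplane, which is only a cosmetic difference. Your caveat is also right: under the paper's convention that $X \in L(K)$, the ``if'' direction fails unless $J$ ranges over proper faces $J \neq X$, and the paper's own proof passes over this when it asserts $K[J] \subset \partial{K}$ for every $J \in L(K)$.
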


\begin{proof}
    $(\Leftarrow)$
    Assume $I \subset J \in L(K)$. Then $K[I] \subset K[J]$,
    and since $J \in L(K)$, $K[J]$ is a face. Thus $K[J] \subset \partial{K}$, and therefore also $K[I] \subset \partial{K}$.

    $(\Rightarrow)$
    Assume $K[I] \subset \partial{K}$. Then $K[I]$ and $\interior{K}$ are two disjoint convex sets.
    Thus there is a hyperplane $H$ that defines two closed halfspaces $H^+$ and $H^-$ such that $K[I] \subset H^+$ and $\interior{K} \subset H^-$.
    Then $K[I] \subset H$ and $K \subset H^-$, meaning $K \cap H$ is a face of $K$.
    Denoting by $J$ the indices of $V(K)$ that belong to $H$, this gives the required statement.
\end{proof}

The main theorem of this section gives a sufficient condition for when small perturbations of a polytope keep its face lattice unchanged.

\begin{theorem}\label{thm:polytope-perturbations}
    Let $K \subset \R^n$ be a full-dimensional polytope with a labelling $v: [m] \rightarrow V(K)$.
    Then there exists $\eps > 0$ such that for any $v': [m] \rightarrow \R^n$,
    if for all $i \in [m]$,
    \[
        \|v_i - v_i'\| < \eps,
    \]
    and if additionally for all $I \in {L(K)}^{n-1}$,
    \begin{gather}\label{eqn:dimension-requirement}
        \dim \aff{\Set{v_i' | i \in I}} \le n - 1,
    \end{gather}
    then $K' = \Conv{\mathrm{Im}(v')}$ is a full-dimensional polytope whose vertices are labelled by $v'$,
    such that $L(K') = L(K)$.
\end{theorem}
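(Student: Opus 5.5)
The plan is to use the facet hyperplanes and strict inequalities that hold for $K$, and to show that they survive perturbation by continuity. For each facet $I \in {L(K)}^{n-1}$, fix an affine functional $f_I(x) = \triangles{a_I, x} - b_I$ with $\|a_I\| = 1$, such that $f_I = 0$ on $K[I]$ and $f_I < 0$ on $\interior{K}$. Since $K$ has finitely many vertices and facets, there is $\eta > 0$ with $f_I(v_j) \le -\eta$ for every facet $I$ and every $j \notin I$. Moreover, $\{v_i : i \in I\}$ affinely spans the hyperplane $\aff{K[I]}$, so it contains $n$ affinely independent vertices. By continuity of determinants, for $\eps$ small the perturbed points $v'_i$, $i \in I$, still contain $n$ affinely independent points. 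Combined with \eqref{eqn:dimension-requirement}, this forces $H'_I \defeq \aff{\Set{v_i' | i\in I}}$ to be a hyperplane. Its unit normal $a'_I$ (oriented consistently) and offset $b'_I$ depend continuously on the chosen $n$ points, so $\|a'_I - a_I\|$ and $|b'_I - b_I|$ can be made arbitrarily small. Hence $f'_I(v'_j) < -\eta/2$ for $j \notin I$, and $f'_I(v'_i) = 0$ for $i \in I$. This continuity step is the main technical point. Condition \eqref{eqn:dimension-requirement} is exactly what ensures that \emph{all} perturbed vertices of a facet stay on one common hyperplane, rather than only the chosen $n$ of them.

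Next, consider $K' = \Conv{\mathrm{Im}(v')}$. Each $H'_I$ supports $K'$, and $F'_I \defeq K' \cap H'_I = \Conv{\Set{v'_i | i \in I}}$ is a face of $K'$. Full-dimensionality holds for small $\eps$, since $n+1$ affinely independent vertices of $K$ stay affinely independent. Next I show that the $F'_I$ are all the facets. The polyhedron $Q' = \bigcap_I \{f'_I \le 0\}$ contains $K'$. I claim $Q' = K'$. Pick a point $c$ in $\interior{K}$, say the centroid of the vertices, with $f_I(c) \le -\eta_0 < 0$ for all $I$. The perturbed centroid $c'$ satisfies $f'_I(c') < 0$ for all $I$ and lies in $\interior{K'}$. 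For $y \in \partial K'$, some facet of $K'$ contains $y$, and I must show its hyperplane is one of the $H'_I$. I would argue via the boundary: the sets $F'_I$ are $(n-1)$-dimensional faces of $K'$, and their union is the radial image of $\partial K$. More concretely, the map sending each $F'_I$ to the corresponding facet, piecewise affine on a common triangulation, gives a homeomorphism onto a sphere around $c'$. A union of facets of a convex body that forms a closed $(n-1)$-sphere surrounding an interior point must be the whole boundary, so every facet of $K'$ is some $F'_I$. An alternative that avoids topology is to check that every ray from $c'$ exits $Q'$ through some $F'_I$, using continuity of the radial boundary map. Either way, $K' = Q'$ and the facets of $K'$ are exactly the $F'_I$.

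Finally, I read off the face lattice. Each $v'_j$ is a vertex of $K'$. For each facet $I \ni j$, $f'_I(v'_j) = 0$, and $v_j$ is the intersection of the facet hyperplanes $\aff{K[I]}$ over all $I \ni j$. Their normals span $\R^n$, which is an open condition preserved by perturbation. Hence $v'_j$ is the unique point of $K'$ on all $H'_I$ with $I \ni j$, so it is a vertex. Vertices are distinct for small $\eps$, so $v'$ labels $V(K')$. Every face of a polytope is an intersection of facets, and the vertex sets of facets are $\Set{j | f'_I(v'_j) = 0} = I$, exactly as for $K$. Thus faces correspond to intersections of facet index sets, identically for $K$ and $K'$, which gives $L(K') = L(K)$. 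The expected obstacle is the second paragraph, showing that no new facets appear. I would try the radial homeomorphism argument first.
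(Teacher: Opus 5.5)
Your first and third paragraphs are sound. The persistence of each $H'_I$, with $f'_I(v'_j)<-\eta/2$ for $j\notin I$, is the paper's proof of ${L(K)}^{n-1}\subset{L(K')}^{n-1}$; the paper phrases it as sign-preservation of determinants. Once the facets of $K'$ are known to be exactly the $F'_I$, reading off the lattice is routine.

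The gap is the step you flag yourself: showing that $K'$ has no further facets. Calling the piecewise affine map between $\partial K$ and $\bigcup_I F'_I$ a homeomorphism requires injectivity, which you do not prove and which is not automatic. For simplices sharing a ridge, an open side-condition suffices. For simplices of the triangulation that meet only in a vertex or lower-dimensional face, within one facet or across different facets, you need a real argument that the perturbed images do not overlap. Your alternative is circular. A ray from $c'$ does leave $Q'$ through some $H'_I$, but showing that the exit point lies in $F'_I=K'\cap H'_I$ rather than in $(Q'\cap H'_I)\setminus K'$ is exactly the claim $Q'=K'$.

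\textbf{Degree-theoretic repair.} You can avoid injectivity entirely. Write $\Phi:\partial K\to\bigcup_I F'_I$ for the simplexwise map $v_i\mapsto v'_i$ on a triangulation of $\partial K$ without new vertices. Fix $c\in\interior{K}$ and take $\eps$ small enough that $c\in\interior{K'}$ and $\eps<\mathrm{dist}(c,\partial K)$.
\begin{enumerate}
\item Since $\|\Phi(x)-x\|<\eps$, the segment from $x$ to $\Phi(x)$ avoids $c$.
\item So $x\mapsto(\Phi(x)-c)/\|\Phi(x)-c\|$ is homotopic to the radial homeomorphism $\partial K\to S^{n-1}$. It therefore has degree $\pm1$ and is onto.
\item Since $\Phi(\partial K)\subset\partial K'$ and radial projection from $c$ is injective on $\partial K'$, this gives $\bigcup_I F'_I=\partial K'$.
\item Any facet of $K'$ then shares an $(n-1)$-dimensional piece, hence its hyperplane, with some $F'_I$, so it equals that $F'_I$.
\end{enumerate}

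\textbf{The paper's route.} This is shorter and purely combinatorial. It adds the open condition that if $\aff{K[I]}$ meets $\interior{K}$, then $\aff{K'[I]}$ meets $\interior{K'}$: a point $\sum_i\lambda_iv_i\in\interior{K}$ moves to $\sum_i\lambda_iv'_i\in\interior{K'}$. Now take the vertex set $I$ of any facet of $K'$. Then $K[I]\subset\partial K$, since otherwise the supporting hyperplane $\aff{K'[I]}$ would meet $\interior{K'}$. By \cref{lem:subset-of-face-indices}, $I$ lies in a face of $K$, hence in some facet $J$ of $K$. By your first step $J$ is already a facet of $K'$, so $I=J$. This needs no topology and nothing beyond what you set up in your first paragraph.
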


\begin{proof}
    By continuity, it is possible to choose a small enough $\eps$ such that
    for every $v': [m] \rightarrow \R^n$ such that all $i \in [m]$
    satisfy $\|v_i - v_i'\| < \eps$,
    \begin{enumerate}[label=\textit{(\roman*)}]
        \item All of $\mathrm{Im}(v')$ are vertices of $K' = \Conv{\mathrm{Im}(v')}$.
        \item For every $I \subset [m]$, $\dim\aff{K'[I]} \ge \dim\aff{K[I]}$.
        \item Any $n + 1$ indices $\curlies{j_1, j_2, \dots, j_{n+1}} \subset [m]$ such that
        \[
            \det{(v_{j_2} - v_{j_1}, v_{j_3} - v_{j_1}, v_{j_4} - v_{j_1}, \dots, v_{j_{n+1}} - v_{j_1})} \ne 0
        \]
        satisfy that the value of
        \[
            \det{(v'_{j_2} - v'_{j_1}, v'_{j_3} - v'_{j_1}, v'_{j_4} - v'_{j_1}, \dots, v'_{j_{n+1}} - v'_{j_1})} \ne 0
        \]
        and the two values have the same \textbf{sign}.
        \item For every $I \subset [m]$, if $\aff{K[I]} \cap \interior{K} \ne \emptyset$, then $\aff{K'[I]} \cap \interior{K'} \ne \emptyset$.
    \end{enumerate}

    We will see that such an $\eps$ is a valid choice for the theorem.
    Let $v': [m] \rightarrow \R^n$ such that $\|v_i - v_i'\| < \eps$ for all $i \in [m]$,
    and assume that for all $I \in L(K)$ describing a facet of $K$,
    \[
        \dim \aff{\Set{v_i' | i \in I}} \le n - 1.
    \]
    Define $K' = \Conv{\mathrm{Im}(v')}$.
    By \textit{(i)}, $K'$ is a polytope whose vertices are labelled by $v'$.
    We claim
    \[
        L(K') = L(K)
    \]
    Since all faces of $L(K)$, including their dimensions, can be deduced from ${L(K)}^{n-1}$,
    it is enough to show that
    \[
        {L(K')}^{n-1} = {L(K)}^{n-1}.
    \]

    First, let $I \in {L(K)}^{n-1}$ be a facet of $K$. Then
    \[  
        n - 1 \overset{(\ref{eqn:dimension-requirement})}{\ge} \dim \aff{K'[I]} \overset{\textit{(ii)}}{\ge} \dim\aff{K[I]} = n - 1,
    \]
    and so
    \[
        \dim \aff{K'[I]} = n - 1.
    \]
    Since $K[I]$ is a facet of $K$, it must be that all of $K[[m] \setminus I]$ lie on the same open halfspace bounded by $\aff{K[I]}$.
    By \textit{(iii)}, all of $K'[[m] \setminus I]$ must lie on the same open halfspace bounded by $\aff{K'[I]}$.
    Then $\aff{K'[I]}$ must be a supporting hyperplane of $K'$, and $I \in {L(K')}^{n-1}$,
    proving that ${L(K)}^{n-1} \subset {L(K')}^{n-1}$.

    Next, let $I \in {L(K')}^{n-1}$.
    It must be that $K[I] \subset \partial{K}$,
    as otherwise this implies $K[I] \cap \interior{K} \ne \emptyset$, and by \textit{(iv)} we would have $K'[I] \cap \interior{K'} \ne \emptyset$,
    contradicting $I$ being a facet of $K'$.    
    By \cref{lem:subset-of-face-indices}, there is $J \in {L(K)}^{n-1}$ such that $I \subset J$.
    We proved ${L(K)}^{n-1} \subset {L(K')}^{n-1}$, therefore $I \subset J \in {L(K')}^{n-1}$.
    Since $I\in {L(K')}^{n-1}$, this means $I = J$, and $I \in {L(K)}^{n-1}$.
    Therefore ${L(K)}^{n-1} = {L(K')}^{n-1}$, and consequently $L(K) = L(K')$.
\end{proof}

\subsection{Triangulations of polytopes}\label{sec:triangulations}
The purpose of this section is to understand how the volume of a polytope behaves when conditioned on a given face lattice.
We begin by defining a triangulation, then citing a result that allows us to construct a triangulation
that simultaneously triangulates all polytopes with a given face lattice.
The following definition is equivalent to that in~\cite{LS2017}.

\begin{definition}
    Let $K \subset \R^n$ be a full-dimensional convex polytope labelled over $X$.
    Then a set $T \subset 2^X$ is called a \textbf{triangulation} of $K$ if
    \begin{enumerate}
        \item For each $A \in T$, $K[A]$ is a full-dimensional simplex.
        \item $K = \bigcup_{A \in T} K[A]$.
        \item Every $A, B \in T$ satisfy that $K[A] \cap K[B]$ is a (possibly empty) face of both $K[A]$ and $K[B]$.
    \end{enumerate}
\end{definition}

From the third property of the definition of a triangulation, it follows that if $A \ne B \in T$ satisfy $\vol[n-1]{K[A] \cap K[B]} > 0$,
then $\size{A \cap B} = \dim K$.

\begin{remark}\label{rem:vertices-on-opposite-sides}
    Let $T$ be a triangulation of $K$, and let $A \ne B \in T$ be simplices with a facet in common.
    Then the two vertices $K[A \setminus B]$ and $K[B \setminus A]$ must lie on opposite sides of $\aff{K[A \cap B]}$.
\end{remark}

If $T$ is a triangulation of $K$ for all $K \in \Poly(L)$, we say that $T$ is a triangulation of $L$.
We are interested in the following proposition:

\begin{proposition}\label{prop:triangulation-of-face-lattice}
    Let $L$ be an $n$-dimensional face lattice whose vertices are labelled over a set $X$.
    Then there exists a set $T \subset 2^X$ that is a triangulation of $L$.
\end{proposition}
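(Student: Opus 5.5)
We want a single $T \subset 2^X$ that triangulates every polytope with face lattice $L$. The natural choice is a \emph{pulling} (placing) triangulation, defined recursively on the face lattice: order $X$ as $x_1, x_2, \dots$. For a face $A \in L$, let $x(A)$ be the first vertex of $A$ in this order. Define $T(A)$ recursively:
\begin{itemize}
    \item if $\dim A = 0$, then $T(A) = \{A\}$;
    \item otherwise $T(A)$ is the union, over all facets $B$ of $A$ (in $\restrict{L}_A$) with $x(A) \notin B$, of the sets $\{\{x(A)\} \cup S : S \in T(B)\}$.
\end{itemize}
We then set $T = T(X)$. Since only the lattice $L$ (not the coordinates) enters this definition, $T$ is the same set for every $K \in \Poly(L)$. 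It remains to check that $T$ is a triangulation of each such $K$.

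Fix $K \in \Poly[n](L)$ with $\dim L = n$ (if the ambient dimension is larger, work inside $\aff{K}$). We prove by induction on $\dim A$ that $T(A)$ triangulates the face $K[A]$, within its affine span.

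\textbf{Simplices are full-dimensional.} Take $B$ a facet of $A$ not containing $v = v_{x(A)}$. Then $v \notin \aff{K[B]}$, because $\aff{K[B]} \cap K[A] = K[B]$ and $v \notin K[B]$. So coning a full-dimensional simplex of $K[B]$ from $v$ gives a full-dimensional simplex of $K[A]$.

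\textbf{Covering.} Every point $y \in K[A]$ lies on a segment from $v$ through $y$ ending at a point $z \in \partial K[A]$. If $z$ lay only on facets containing $v$, the whole segment $[v,z]$ would lie in such a facet, and we could extend it past $z$; so we can choose $z$ in a facet $B$ not containing $v$. By induction, $z$ lies in a simplex of $T(B)$, and then $y$ lies in the cone of that simplex from $v$.

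\textbf{Intersections are common faces.} This is the main obstacle. Two simplices coned from the same facet $B$ intersect in the cone over their common face in $K[B]$, by induction. For simplices over distinct facets $B$ and $B'$, the cones $\Conv{\{v\} \cup K[B]}$ and $\Conv{\{v\} \cup K[B']}$ meet exactly in $\Conv{\{v\} \cup (K[B] \cap K[B'])}$. This holds because the cones over different facets not containing $v$ have disjoint relative interiors: each point of $K[A] \setminus \{v\}$ has a unique exit point along the ray from $v$.

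Finally, $K[B] \cap K[B'] = K[B \cap B']$ is a face of both, and the restrictions of $T(B)$ and $T(B')$ to it agree. Indeed, the recursion on $B \cap B'$ uses the same ordering, and its first vertex is computed consistently. We would need a small lemma that $T(B)$ restricted to a face $C$ of $B$ equals $T(C)$, proved by the same induction. This compatibility check is where most of the care goes. Combining the pieces, intersections of simplices are cones over common faces, hence common faces.
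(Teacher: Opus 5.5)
Your proposal is correct and takes essentially the paper's approach. Both use the pulling triangulation, whose definition involves only the vertex order and the face lattice, so a single $T$ works for every polytope with face lattice $L$. The paper cites \cite[Lemma 6.1]{JZ2004} and the standard fact that pulling triangulations are triangulations, while you verify that fact directly. The compatibility lemma you defer, that $T(B)$ induces $T(C)$ on each face $C$ of $B$, does hold and follows by the same induction on dimension if you prove both inclusions together.
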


This proposition is a direct consequence of~\cite[Lemma 6.1]{JZ2004},
which has been slightly rephrased to work with the notations in this paper:

\begin{lemma}[\text{\cite[Lemma 6.1]{JZ2004}}]
    Let $P$ be a $d$-dimensional polytope labelled by $v: [n] \rightarrow V(P)$.
    Then a set $\curlies{x_1, x_2, \dots, x_d} \subset [n]$ corresponds to a facet of the pulling triangulation of $\partial{P}$
    (with respect to the order given by the chosen vertex labelling) if and only if there is a complete flag of faces
    \[
        \emptyset \subset G_0 \subset G_1 \subset \dots \subset G_{d-1} \subset P,
    \]
    such that $v_{x_i}$ is the smallest vertex in $G_{d-i}$ for $1 \le i \le d$, that is, if there are facets
    $F_1, \dots, F_d$ of $P$ such that
    \[
        v_{x_i} = \min \parens{F_1 \cap \dots \cap F_i}
    \]
    for $1 \le i \le d$.
\end{lemma}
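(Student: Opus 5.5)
The statement is a recursive description of the pulling triangulation, so I would prove it by induction on $d$, working directly from the recursive definition of pulling. Recall the definition: for a $k$-dimensional polytope $Q$ with ordered vertices, the pulling triangulation of $Q$ is obtained by taking $w = \min Q$, pulling-triangulating every facet $G$ of $Q$ with $w \notin G$ (using the induced order), and coning each resulting $(k-1)$-simplex with apex $w$. The pulling triangulation of $\partial P$ is then the union, over all facets $F$ of $P$, of the pulling triangulations of the faces $F$.

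A preliminary step is that this union is well defined. The pulling triangulation of a face, with the induced order, is the restriction of the pulling triangulation of any larger face containing it. This holds because the recursion on a face $G \subset Q$ only ever uses the minima of the faces of $G$, and these are the same whether they are computed inside $G$ or inside $Q$. With this, the maximal simplices of the triangulation of $\partial P$ are exactly the maximal simplices of the pulling triangulations of the individual facets $F$.

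The induction itself runs as follows.
\begin{enumerate}
\item For $d = 1$, $\partial P$ consists of two points. Each is a facet $F_1 = G_0$ and its own minimum, so the claim is immediate.
\item For the inductive step, a $(d-1)$-simplex in the triangulation of $\partial P$ lies in some facet $F_1 = G_{d-1}$. By the definition of pulling, it has the form $\{\min F_1\} \cup S$, where $S$ is a $(d-2)$-simplex of the pulling triangulation of some facet $G$ of $F_1$ with $\min F_1 \notin G$. Equivalently, $S$ is a facet of the pulling triangulation of $\partial F_1$ lying in such a $G$.
\item Applying the inductive hypothesis to the $(d-1)$-polytope $F_1$, the set $S$ corresponds to a complete flag $G_0 \subset \dots \subset G_{d-2}$ of $F_1$, with $v_{x_i} = \min G_{d-i}$ for $2 \le i \le d$. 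Appending $G_{d-1} = F_1$ and $x_1$ with $v_{x_1} = \min F_1$ gives the required flag of $P$.
\item For the converse, start from a flag and a $d$-element set $\{x_1, \dots, x_d\}$, reverse the construction, and use the inductive hypothesis for $F_1$.
\end{enumerate}

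The main obstacle is the side condition $\min F_1 \notin G$ on the facet carrying $S$. The flag formulation does not state this condition, so I must show it is forced by requiring $\{x_1, \dots, x_d\}$ to have $d$ \emph{distinct} elements.
\begin{itemize}
\item If $\min G_{d-i} \in G_{d-i-1}$, then, since $G_{d-i-1} \subset G_{d-i}$, the two faces have the same minimum. Hence $x_i = x_{i+1}$, which contradicts distinctness.
\item Conversely, when the minima along the flag are pairwise distinct, each $G_{d-i-1}$ is a facet of $G_{d-i}$ avoiding $\min G_{d-i}$. This is exactly what the pulling recursion uses at that level.
\end{itemize}
I would also check the two translations carefully. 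One is between the flag description and the description by facets $F_1, \dots, F_d$ of $P$ with $G_{d-i} = F_1 \cap \dots \cap F_i$; this holds because every face of a polytope is an intersection of the facets containing it. The other is that $\{\min F_1\} \cup S$ is indeed full-dimensional in $\aff{F_1}$, which holds because $S$ spans a hyperplane of $\aff{F_1}$ not containing $\min F_1$.
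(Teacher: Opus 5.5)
The paper does not prove this statement. It quotes it from Joswig--Ziegler and uses it only as a black box; all it takes from it is that a pulling triangulation is a triangulation and that the construction depends on the face lattice alone. So there is no paper proof to compare with, and your argument has to stand on its own.

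It does stand. Induction on $d$ directly from the recursive definition is the natural route, and you have found the one real point. Along a flag the minima are weakly increasing as the faces shrink. Hence the $x_i$ are pairwise distinct exactly when $\min G_j \notin G_{j-1}$ for every $j$, and that is precisely the side condition the pulling recursion imposes.

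Two details should be tightened.

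\emph{The compatibility step is not needed, and its justification is incomplete.} Write $\Delta(Q)$ for the pulling triangulation of $Q$.
\begin{enumerate}
\item To characterize the top-dimensional simplices you do not need compatibility of pulling triangulations with faces. A $(d-1)$-simplex contained in $\partial P$ spans the affine hull of a unique facet. So the maximal simplices of $\bigcup_F \Delta(F)$ are just the maximal simplices of the individual $\Delta(F)$.
\item The same observation justifies your ``equivalently'' inside $\partial F_1$.
\item Compatibility matters only for knowing that this union is a genuine triangulation of $\partial P$, or that it agrees with the boundary of $\Delta(P)$.
\item For that purpose your one-line justification falls short. It shows only that $\Delta(G)$ does not depend on the ambient polytope. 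The restriction property also needs the observation that pulling a vertex commutes with restricting to a face.
\end{enumerate}

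\emph{The translation between facets and flags needs an argument in each direction.}
\begin{enumerate}
\item From facets $F_1, \dots, F_d$ to a complete flag: $F_1 \cap \dots \cap F_i$ need not have dimension $d-i$ in general. Distinctness of the minima is again what makes these faces strictly nested. Then $d$ strictly nested nonempty faces, the largest being a facet, must have dimensions $d-1, \dots, 0$.
\item From a complete flag to facets: you need each $G_{d-i-1}$ to equal $G_{d-i} \cap F$ for a single facet $F$ of $P$. This holds because $G_{d-i-1}$ is the intersection of the facets containing it, and not all of those can contain $G_{d-i}$.
\end{enumerate}
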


The meaning of ``pulling triangulation'' is defined in~\cite{LS2017}, and is not important for this paper.
The only important fact is that a pulling triangulation is a triangulation, and that the construction in~\cite[Lemma 6.1]{JZ2004}
depends only on the face lattice of the polytope, giving us \cref{prop:triangulation-of-face-lattice}.

Our next objective will be to prove \cref{cor:symmetric-single-vertex-movement-implies-affine-volume}
which states that in certain situations, the volume of a polytope under perturbations of its vertices behaves affinely.
We will achieve this by first proving the following more general theorem:

\begin{theorem}\label{thm:smooth-volume-by-face-lattice}
    Let $L$ be an $n$-dimensional face lattice labelled over $[k]$.
    Then there exists a polynomial $f: {(\R^n)}^k \rightarrow \R$
    such that every polytope $K \in \Poly[n](L)$ whose vertex labelling is $v$ satisfies
    \[
        \vol{K} = \abs{f(V(K))} \defeq \abs{f\parens{v_1, v_2, \dots, v_k}}.
    \]
    Additionally, any vectors $w_1, w_2, \dots, w_k \in \R^n$ and constants $c_1, c_2, \dots, c_k \in \R$
    satisfy that the mapping
    \[
        v \mapsto f(w_1 + c_1 v, w_2 + c_2 v, \dots, w_k + c_k v)
    \]
    is affine.
\end{theorem}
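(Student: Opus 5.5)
Take a triangulation $T \subset 2^{[k]}$ of $L$ from \cref{prop:triangulation-of-face-lattice}, fix an ordering of each $A = \{a_0 < a_1 < \dots < a_n\} \in T$, and set
\[
    D_A(v_1, \dots, v_k) \defeq \frac{1}{n!}\det\parens{v_{a_1} - v_{a_0}, \dots, v_{a_n} - v_{a_0}}.
\]
This is a polynomial, and for any $K \in \Poly[n](L)$ we have $\vol{K[A]} = \abs{D_A(V(K))}$. Since $T$ is a triangulation of every $K \in \Poly[n](L)$, the simplices $K[A]$ cover $K$ and meet only in common faces, which have measure zero. Hence $\vol{K} = \sum_{A \in T} \abs{D_A(V(K))}$.

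The task is to remove the absolute values by choosing signs $\sigma_A \in \{\pm 1\}$, independent of $K$, so that $\sigma_A D_A(V(K)) > 0$ for all $K \in \Poly[n](L)$ simultaneously; then $f \defeq \sum_A \sigma_A D_A$ works. I will argue through orientation consistency.

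Fix one $K_0 \in \Poly[n](L)$ and take $\sigma_A = \operatorname{sign} D_A(V(K_0))$. Triangulations of a convex body are pure and strongly connected: the dual graph, whose edges join simplices sharing an $(n-1)$-face, is connected. This holds because the union is a convex, hence connected, $n$-manifold with boundary, and removing the codimension-$2$ skeleton does not disconnect it. Consider adjacent $A, B \in T$ with $A \cap B = C$, $A = C \cup \{a\}$ and $B = C \cup \{b\}$. By \cref{rem:vertices-on-opposite-sides}, in every $K$ the points $v_a$ and $v_b$ lie on opposite sides of $\aff{K[C]}$. Writing both determinants with the same ordering of $C$ followed by the apex shows that $D_A$ and $D_B$, up to a fixed permutation sign depending only on the orderings, have opposite signs in every $K$. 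Therefore the relative sign $\operatorname{sign} D_A \cdot \operatorname{sign} D_B$ is the same constant for every $K \in \Poly[n](L)$. Propagating along the connected dual graph, $\operatorname{sign} D_A(V(K)) = \tau(K)\,\sigma_A$ for a global sign $\tau(K) \in \{\pm 1\}$. Then
\[
    \vol{K} = \abs{\textstyle\sum_A \sigma_A D_A(V(K))} = \abs{f(V(K))}.
\]
I expect this consistency step, in particular justifying connectivity of the dual graph, to be the main point needing care. A possible alternative is to use the connectivity of the pulling triangulation directly.

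For the affineness claim, substitute $v_i = w_i + c_i v$ into each $D_A$. Each column becomes
\[
    v_{a_j} - v_{a_0} = (w_{a_j} - w_{a_0}) + (c_{a_j} - c_{a_0})\, v,
\]
so every column is a constant vector plus a scalar multiple of the same vector $v$. Expand the determinant by multilinearity in the columns. Any term that selects the $v$-part from two or more columns vanishes, because those columns are parallel. What remains is a constant plus a sum of terms linear in $v$, so each $D_A$ is affine in $v$, and hence so is $f$ as a finite sum.
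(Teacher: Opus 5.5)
Your proof is correct and follows essentially the same route as the paper. The paper likewise sums signed simplex determinants over a triangulation of the face lattice, makes the signs consistent using the opposite-sides remark propagated along the connected dual graph, and gets affineness from the determinant's rows having the form $w + c\,v$. The differences are minor: you fix the signs $\sigma_A$ from a reference polytope $K_0$ rather than through permutation signs along a spanning tree, and you justify the dual-graph connectivity and the multilinear expansion, both of which the paper only asserts.
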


\begin{proof}
    Let $T \subset 2^{[k]}$ be a triangulation of $L$ as in \cref{prop:triangulation-of-face-lattice}.
    For each $A \in T$, order the indices in $A$ in an arbitrary order,
    \[
        A = (i_1, i_2, \dots, i_{n+1}).
    \]
    The volume of the simplex $K[A]$ is given by
    \[
        \vol{K[A]} = \abs{\frac{1}{n!} \cdot \det \begin{bmatrix} v_{i_2} - v_{i_1} \\ v_{i_3} - v_{i_1} \\ \vdots \\ v_{i_{n+1}} - v_{i_1} \end{bmatrix}}
    \]
    where the determinant is a polynomial in $v_{i_1}, v_{i_2}, \dots, v_{i_{n+1}}$.
    Define $f_A$ to be the polynomial such that
    \begin{gather*}
        f_A: \parens{\R^n}^k \rightarrow \R \\
        \vol{K[A]} = \abs{f_A(V(K))}.
    \end{gather*}
    Note that $f_A$ ignores all of its inputs except for the vertices belonging to $A$.
    
    This allows $\vol{K}$ to be expressed as follows
    \[
        \vol{K} = \sum_{A \in T} \vol{K[A]} = \sum_{A \in T} \abs{f_A(V(K))}.
    \]
    Next, we will pick signs $\sigma_A \in \curlies{-1, 1}$ such that
    for every $A, B \in T$, and every $K \in \Poly[n](L)$,
    \begin{gather}
        \mathrm{sign}(\sigma_A \cdot f_A(V(K))) = \mathrm{sign}(\sigma_B \cdot f_B(V(K))). \label{eqn:all-signs-are-the-same}
    \end{gather}
    This will make the polynomial $f = \sum_{A \in T} \sigma_A \cdot f_A$ satisfy $\vol{K} = \abs{f(V(K))}$.
    In fact, it would also satisfy that
    \[
        v \mapsto f(w_1 + c_1 v, w_2 + c_2 v, \dots, w_k + c_k v)
    \]
    is affine, since this is satisfied by each summand,
    as they are each a determinant where each row is of the form $w + c \cdot v$, for constants $w \in \R^n$ and $c \in \R$.

    It is left to pick signs $\parens{\sigma_A}_{A \in T}$ that satisfy (\ref{eqn:all-signs-are-the-same}).
    Consider the graph $G$, whose vertices are elements of $T$,
    with an edge between $A, B \in T$ when $\size{A \cap B} = n$.
    Since $T$ is a triangulation, this graph is connected.
    Let $G_0 \subset G$ be a spanning tree of $G$.
    For any $A, B \in T$ connected by an edge,
    let $\tau(A,B) \in S_{n+1}$ be the permutation that reorders the elements in $A$ so that the elements it shares with $B$ are in the same positions
    as in $B$.
    Now, choose signs $\curlies{\sigma_A}_{A \in T} \subset \curlies{-1, 1}$
    such that for any $A, B \in T$ connected by an edge in $G_0$,
    \[
        \sigma_A \cdot \sigma_B = -\mathrm{sign}(\tau(A,B)).
    \]
    Due to \cref{rem:vertices-on-opposite-sides}, (\ref{eqn:all-signs-are-the-same}) is satisfied for every $A, B \in T$ sharing an edge in $G_0$.
    Since $G_0$ spans $G$, for every $K \in \Poly[n](L)$, the values $\mathrm{sign}\parens{\sigma_A \cdot f_A(V(K))}$ are equal for all $A \in T$.
    Therefore (\ref{eqn:all-signs-are-the-same}) is satisfied in general, completing the theorem.
\end{proof}

\begin{corollary}\label{cor:symmetric-single-vertex-movement-implies-affine-volume}
    Let $L$ be an $n$-dimensional face lattice labelled over $[2k]$,
    and fix $x_2, x_3, \dots, x_k \in \R^n$.
    Denote by $U \subset \R^n$ the set of points such that for every $x \in U$,
    the polytope
    \[
        K(x) \defeq \Conv{\pm x, \pm x_2, \pm x_3, \dots, \pm x_k}
    \]
    when labelled with $v: [2k] \rightarrow \R^n$,
    \[
        (v_1, v_2, v_3, \dots, v_{2k}) = (x, -x, x_2, -x_2, x_3, -x_3, \dots, x_k, -x_k)
    \]
    satisfies $L(K(x)) = L$.
    Then the mapping $U \rightarrow \R$ defined as
    \[
        x \mapsto \vol[n]{K(x)}
    \]
    is an affine function.
\end{corollary}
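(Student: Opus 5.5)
The corollary should follow from \cref{thm:smooth-volume-by-face-lattice} by specialising its second part to the present labelling. Let $f: {(\R^n)}^{2k} \rightarrow \R$ be the polynomial given by \cref{thm:smooth-volume-by-face-lattice} for the lattice $L$ (labelled over $[2k]$). I would choose the vectors $w_1, \dots, w_{2k}$ and constants $c_1, \dots, c_{2k}$ as follows:
\[
    (w_1, c_1) = (0, 1), \quad (w_2, c_2) = (0, -1), \quad (w_{2j-1}, c_{2j-1}) = (x_j, 0), \quad (w_{2j}, c_{2j}) = (-x_j, 0)
\]
for $2 \le j \le k$. Then $w_i + c_i x = v_i$ is exactly the prescribed labelling of $K(x)$. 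The second part of \cref{thm:smooth-volume-by-face-lattice} says that
\[
    g(x) \defeq f(x, -x, x_2, -x_2, \dots, x_k, -x_k)
\]
is an affine function of $x \in \R^n$. For every $x \in U$ we have $K(x) \in \Poly[n](L)$, so the first part of the theorem gives $\vol[n]{K(x)} = \abs{g(x)}$ on $U$.

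It remains to remove the absolute value. Since $\dim L = n$, every $K(x)$ with $x \in U$ is full-dimensional, so $g(x) \ne 0$ on $U$. On any connected subset of $U$, continuity of $g$ then forces $g$ to have constant sign, so $\vol[n]{K(x)} = \pm g(x)$ there, which is affine.

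The main obstacle is showing that the sign of $g$ is constant on all of $U$, not just on each connected component. My first approach uses the signed triangulation from the proof of \cref{thm:smooth-volume-by-face-lattice}. There, $f = \sum_{A \in T} \sigma_A f_A$, and all terms $\sigma_A f_A(V(K))$ have the same sign for each $K \in \Poly[n](L)$. So it suffices to find one simplex $A \in T$ whose sign $\sigma_A f_A(V(K(x)))$ does not depend on $x \in U$.
\begin{itemize}
    \item If some $A \in T$ avoids the indices $1$ and $2$, then $f_A$ depends only on the fixed points $\pm x_j$. It is therefore a nonzero constant, and the sign of $g$ is determined by it.
    \item If no such $A$ exists, I would use the symmetry of $K(x)$. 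Swapping the labels of each antipodal pair $v_{2j-1} \leftrightarrow v_{2j}$ and applying $-\mathrm{Id}$ fixes the lattice $L$. Combined with the sign rule for adjacent simplices from \cref{rem:vertices-on-opposite-sides}, this should pin down the orientation of the labelled polytope combinatorially.
\end{itemize}
If this global argument turns out to be delicate, I would fall back on the local statement: $x \mapsto \vol[n]{K(x)}$ is affine on each connected component of $U$, and in particular on a neighbourhood of any given point. This is what is needed when the corollary is combined with \cref{thm:polytope-perturbations} to study local minimizers, since that theorem provides an open neighbourhood on which the face lattice is constant.
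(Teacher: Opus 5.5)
Your choice of $(w_i,c_i)$ is exactly the paper's, and the paper's proof stops there. It takes the affineness of $g(x)=f(x,-x,x_2,-x_2,\dots)$ from the second part of \cref{thm:smooth-volume-by-face-lattice} and silently identifies $g$ with $x\mapsto \mathrm{Vol}(K(x))=\abs{g(x)}$. You are right that the absolute value is the real issue, and what you prove about it is correct. Since $g\neq 0$ on $U$, you get $\mathrm{Vol}(K(x))=\pm g(x)$ on each connected subset of $U$. Your first bullet also correctly fixes the sign on all of $U$ via (\ref{eqn:all-signs-are-the-same}).

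Your second bullet, however, cannot be completed, because the global claim is false. Take $n=k=2$ and $x_2=e_1$. For every $x=(a,b)$ with $b\neq 0$, $K(x)$ is a parallelogram with edges $\{1,3\},\{3,2\},\{2,4\},\{4,1\}$, so $U=\{b\neq 0\}$. But $\mathrm{Vol}(K(x))=2\abs{\det(x,e_1)}=2\abs{b}$ is not affine on $U$. Even $n=k=1$ already gives $2\abs{x}$ on $\R\setminus\{0\}$. The underlying reason is that a labelled face lattice is invariant under orientation-reversing linear maps, so it cannot see the sign of $g$. Whenever $x_2,\dots,x_k$ lie in a hyperplane through $0$, the reflection $R$ in it satisfies $R(U)=U$ and $g\circ R=-g$. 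The label swap composed with $-\mathrm{Id}$ that you suggest maps each labelled $K(x)$ to itself, so it relates nothing across different values of $x$.

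So your fallback is the statement that is actually true, and the corollary (together with the paper's one-line proof) should be weakened to it. State it in the form your argument actually gives: a single affine $g$ on $\R^n$, depending only on $L$ and $x_2,\dots,x_k$, with $\mathrm{Vol}(K(x))=\abs{g(x)}$ on $U$. This form covers both uses in the paper.
\begin{itemize}
\item In \cref{lem:minimum-implies-floating-singularity}, $U$ may be taken to be a small ball intersected with $W$, which is convex, so the local statement suffices.
\item In the proof of \cref{thm:main-floating-body}, the sets $\{t : L(P_t)=L_i\}$ need not be connected, so one does not get one affine $A_i$ per lattice as claimed there. Still, $\mathrm{Vol}(P_t)$ always equals one of the finitely many affine functions $\pm g_i(\gamma(t))$, and that is all the subsequent continuity argument uses.
\end{itemize}
Note that ``affine on each connected component'' alone, without the common $g$, would not give this finiteness.
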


\begin{proof}
    This follows immediately from the second part of \cref{thm:smooth-volume-by-face-lattice},
    by setting
    \begin{gather*}
        (c_1, c_2, \dots, c_{2k}) = (1, -1, 0, 0, \dots, 0) \\
        (w_1, w_2, \dots, w_{2k}) = (0, 0, x_2, -x_2, x_3, -x_3, \dots, x_k, -x_k).
    \end{gather*}
\end{proof}

\subsection{Singularities of floating bodies}\label{sec:singularities}
The purpose of this section is to define what we mean by a singularity of a convex body, and to state and prove \cref{lem:minimum-implies-floating-singularity},
which gives a connection between singularities and local minimizers of the Mahler product.

Let $A \subset \R^n$ be a convex body.
The \textbf{support function} of $A$ is a function $h_A: \R^n \rightarrow \R$ defined by
\begin{align*}
    h_A(u) &= \sup_{x \in A} \triangles{x, u}.
\end{align*}
The \textbf{supporting hyperplane} in a direction $u \in \R^n \setminus \curlies{0}$ is defined as
\[
    H_A(u) \defeq \Set{y \in \R^n | \triangles{y, u} = h_A(u)}.
\]
Finally, for $x \in \partial{A}$, we define the \textbf{supporting directions} of $x$
to be the set of directions of supporting hyperplanes containing $x$. Namely,
\[
    \mathrm{Supp}_A(x) \defeq \Set{u \in \R^n | x \in H_A(u)} \cup \curlies{0}.
\]
We remark that $\mathrm{Supp}_A(x)$ is a nonempty convex cone, and that it satisfies $1 \le \dim \mathrm{Supp}_A(x) \le n$.

\begin{definition}\label{def:convex-singularity}
    Let $A \subset \R^n$ be a convex body. For any point on its boundary $x \in \partial{A}$,
    let $d = \dim \mathrm{Supp}_A(x) - 1$.
    If $d \ge 1$, we say that $x$ is a \textbf{singularity} of $A$, and call $d$ the \textbf{dimension} of the singularity.
    If $d = n - 1$, we say that $x$ is a \textbf{full-dimensional} singularity.
\end{definition}

For example, in the cube ${[-1, 1]}^3$, the point $(1, 1, 1)$ is a full-dimensional singularity (of dimension $2$),
and the point $(1, 1, 0)$ is a singularity of dimension $1$.
More generally, for any $n$-dimensional polytope $K$, a point $x \in \partial{K}$ is a singularity of dimension $n - i - 1$,
whenever $i$ is the minimal dimension of a face of $K$ containing $x$, and $i < n - 1$.


For a polytope $K$ and a vertex $v \in K$, we denote by $\deg_K(v) \in \N$
the degree of the vertex $v$ with respect to the $1$-skeleton of $K$.
Denote by $F(K)$ the facets of $K$, and by $H^-(K)$ the halfspaces supporting $K$ at its facets.

For any $r > 0$ and for every halfspace $H^-$, denote $H^- + r \defeq H^- + B(r)$, the $r$-neighborhood of the halfspace.
Note that this is also a halfspace. Our next lemma gives a connection between Mahler product minimizers and floating bodies containing a singularity.

\begin{lemma}\label{lem:minimum-implies-floating-singularity}
    Let $K$ be a local minimizer of the Mahler product among $n$-dimensional symmetric polytopes with at most $2k$ facets.
    Denote
    \begin{align*}
        F(K) &= \Set{\pm F_1, \pm F_2, \dots, \pm F_k} \\
        H^-(K) &= \Set{\pm H^-_1, \pm H^-_2, \dots, \pm H^-_k}.
    \end{align*}
    (As the value of $k$ will not be used in any other way in the statement of the lemma,
    we may assume without loss of generality that $K$ has precisely $2k$ facets.)
    Let
    \[
        m = \dim \aff{\curlies{v \in V(F_1) | \deg_K(v) > n}}.
    \]
    Let $\eps > 0$, and denote
    \[
        K_\eps = (H_1^- + \eps) \cap ((-H_1^-) + \eps) \cap \bigcap_{i=2}^k (H^-_i \cap (-H^-_i)).
    \]
    Let $0 < \delta < \frac{1}{2}$ be the unique value such that $\aff{F_1}$ is a supporting hyperplane of $\parens{K_\eps}_{[\delta]}$.
    Then the barycenter of $F_1$ is a singularity of $\parens{K_\eps}_{[\delta]}$ of dimension at least $n - m - 1$.
\end{lemma}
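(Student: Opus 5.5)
The plan is to dualize: tilting the facet $F_1$ of $K$ corresponds to moving the vertex $p \defeq F_1^\diamond$ of $K^\circ$, together with $-p$. I would then use local minimality of $K$ along the dual moves that preserve the face lattice of $K^\circ$, and read off an $(n-m)$-dimensional cone of supporting directions of $\parens{K_\eps}_{[\delta]}$ at the barycenter $b$ of $F_1$.

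First I would translate the statement into cap volumes. Set $V = \vol{K_\eps}$ and, for $u \ne 0$, let $g(u) = \vol{K_\eps \cap \Set{y | \triangles{y,u} \ge \triangles{b,u}}}$. Since $K_\eps$ is symmetric, every $\delta$-cap supports the floating body. Hence $u \in \mathrm{Supp}_{(K_\eps)_{[\delta]}}(b)$ holds exactly when the $\delta$-cap in direction $u$ lies at level at most $\triangles{b,u}$, i.e.\ when $g(u) \le \delta V$.

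For $u_1$ the outer normal of $F_1$, the section $K_\eps \cap \aff{F_1}$ is $F_1$ itself. By the choice of $\delta$ we have $g(u_1) = \delta V$, and $b$ lies on the floating body because it is the barycenter of the section (the standard first-variation condition, so $\nabla g(u_1) = 0$ in the tilt directions). It therefore suffices to exhibit a linear space $D$ of dimension $n-m-1$, transversal to $u_1$, such that
\[
    g(u_1 + d) \le g(u_1) \quad \text{for all small } d \in D.
\]
Then the cone spanned by $u_1 + D$ has dimension $n-m$, and $b$ is a singularity of dimension at least $n-m-1$.

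To find $D$, let $w_1, \dots, w_r$ be the vertices of $F_1$ of degree $> n$; by definition their affine span has dimension $m$. The facets of $K^\circ$ through $p$ are the duals $w^\diamond$ of vertices $w$ of $F_1$. For a vertex of degree $n$ the dual facet is a simplex, so condition (\ref{eqn:dimension-requirement}) of \cref{thm:polytope-perturbations} is automatic. For each $w_j$ the condition holds as long as $p$ stays in the hyperplane $w_j^\circ$. So let $D$ be the direction space of $\bigcap_j w_j^\circ$, which has dimension $n-1-m$.

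By \cref{thm:polytope-perturbations}, moving $\pm p$ to $\pm(p + d)$ with $d \in D$ small keeps $L(K^\circ)$ fixed. By \cref{cor:symmetric-single-vertex-movement-implies-affine-volume}, $t \mapsto \vol{K^\circ(p+td)}$ is then affine. These moved bodies are symmetric with $2k$ vertices, so their polars are symmetric polytopes with $2k$ facets, obtained from $K$ by tilting $\pm F_1$ about an axis containing $\aff{w_1,\dots,w_r}$. Local minimality of $\vol{K}\vol{K^\circ}$ on the line $t \mapsto p + td$, combined with the affine dual factor, forces a second-order inequality on $t \mapsto \vol{(K^\circ(p+td))^\circ}$: its second derivative at $0$ must be bounded below by the (signed) first-order terms.

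The main obstacle is transferring this inequality from $K$ to the cap function $g$ of $K_\eps$. The hyperplane in direction $u_1 + td$ through $b$ cuts $K_\eps$ into the $\eps$-slab region plus or minus the wedge swept by tilting $\aff{F_1}$. Since the tilt axis can be taken through $b$ (after adding a multiple of $p$ to the move, which only rescales), the wedge volumes are exactly the first- and second-order variations of $\vol{(K^\circ(p+td))^\circ}$. The first-order term vanishes by the barycenter condition.

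To make this precise I would follow the shadow-system/parallel-chord argument of Meyer and Reisner~\cite{MR2008}. The aim is to show that $\vol{K_\eps} - \vol{K_\eps \cap \{\text{tilted halfspace}\}}$ differs from the corresponding quantity for $K$ only by the slab of width $\eps$ over the tilted section. That slab has volume $\eps\cdot\vol[n-1]{\text{section}}$, which is maximized at $t=0$ by Brunn's principle applied to a section through the barycenter. Symmetrizing over $\pm d$ then gives $g(u_1 \pm td) \le g(u_1)$ for small $t$, which is the inequality required above.
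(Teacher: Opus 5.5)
Your setup agrees with the paper's. $D$ is the direction space of the paper's $W=\bigcap_j w_j^\circ$, and \cref{thm:polytope-perturbations} together with \cref{cor:symmetric-single-vertex-movement-implies-affine-volume} make $\phi(t)=\vol{K^\circ(p+td)}$ affine for $d\in D$. The gap is the step that puts the tilt axis through $b$. For $d\in D$ the polar hyperplane $(p+td)^\circ=\{y:\langle y,p\rangle+t\langle y,d\rangle=1\}$ rotates about $\aff{F_1}\cap\{y:\langle y,d\rangle=0\}$. That axis contains the $w_j$, but it contains $b$ only if $\langle b,d\rangle=0$, and nothing in your argument gives that.

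Adding $cp$ to the move does not repair this. Since $\langle w_j,p\rangle=1$, the point $p+t(d+cp)$ leaves $W$ for $c\neq 0$, so (\ref{eqn:dimension-requirement}) fails for the non-simplex facets $w_j^\diamond$ and the affine volume formula is lost. Also, rescaling a dual point translates its polar hyperplane; it does not fix it. Equivalently, the hyperplane through $b$ with normal $u_1+d$ contains the $w_j$ only when $\langle b,d\rangle=0$. For the remaining $d\in D$, the cap $g(u_1+d)$ belongs to a competitor outside your face-lattice-preserving family, where you have no control of the dual volume, so local minimality yields no bound on it.

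Concretely, $\psi(t)=\vol{(K^\circ(p+td))^\circ}$ satisfies $\psi'(0)=-\frac{2}{|p|}\vol[n-1]{F_1}\langle b,d\rangle$. The barycenter property does not make this vanish, and local minimality only gives $\psi'(0)=-\psi(0)\phi'(0)/\phi(0)$. So the directions you can actually treat are those with $\phi'(0)=0$, i.e.\ $D\cap b^\perp$. This is precisely the direction space of the paper's level set $N$ of $u\mapsto\vol{K_u^\circ}$ in $W$, and it has codimension one in $D$ unless $\phi$ is constant on $W$.

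Even on $D\cap b^\perp$ the second-order route does not close:
\begin{itemize}
\item A Taylor inequality at $t=0$ cannot give the exact inequality $g(u_1+d)\le g(u_1)$ on a neighbourhood, which membership in $\mathrm{Supp}$ requires. The situation is also degenerate: for a prism, $g$ is locally constant.
\item ``Symmetrizing over $\pm d$'' only bounds the sum $g(u_1+td)+g(u_1-td)$.
\item Brunn's principle concerns parallel sections, not sections rotating about the non-central point $b$.
\item $K_\eps\setminus K$ cut by a tilted hyperplane is not a slab of width $\eps$ over the tilted section.
\end{itemize}
The paper needs no expansion at all. On $N$ the dual volume is exactly constant, so local minimality gives exactly $\vol{K}\le\vol{K_u}$, i.e.\ the cap of $K_\eps$ cut off by $u^\circ$ has volume at most $\delta\vol{K_\eps}$. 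A short argument then shows $\zeta\in u^\circ$ for all $u\in N$ near $p$, so each such $u^\circ$ supports $(K_\eps)_{[\delta]}$ at $\zeta$.

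On the dimension count, your $\dim D=n-1-m$ is correct, while the paper writes $\dim W=n-m$. With the correct count, the paper's argument gives a singularity of dimension at least $n-m-2$. That suffices for \cref{thm:main-mahler}, since at most $n-2$ high-degree vertices give $m\le n-3$. With $\dim\aff{\emptyset}=-1$, the stated bound $n-m-1$ cannot hold when no vertex of $F_1$ has degree $>n$: a square would need a singularity of dimension $2$ in $\R^2$.

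So the bound you aim for needs control beyond $N$, which requires an idea absent from your sketch. One option is the convexity of $t\mapsto\vol{P_t^\circ}^{-1}$ along the symmetric shadow system $P_t=K^\circ(p+td)$; this is the inequality whose equality case is \cref{prop:shadow-system-equality-case}. Since $\vol{P_t}$ is affine, local minimality makes $t\mapsto\vol{P_t}-\vol{K}\vol{K^\circ}/\vol{P_t^\circ}$ concave with an interior minimum $0$, hence zero near $t=0$. \Cref{prop:shadow-system-equality-case} then forces $K^\circ$ to be a bipyramid with apices $\pm p$, i.e.\ $K$ a prism with bases $\pm F_1$, and for such a prism $g$ is locally constant near $u_1$.
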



\begin{proof}
    By~\cite[Proposition 1, (iii-iv)]{Werner2005}, $F_1 \cap \parens{K_\eps}_{[\delta]}$ is a single point
    \[
        F_1 \cap \parens{K_\eps}_{[\delta]} = \curlies{\zeta}
    \]
    and that point $\zeta$ is the barycenter of $F_1$.
    Let
    \[
        M = \aff{\curlies{v \in V(F_1) | \deg_K(v) > n}},
    \]
    so that $m = \dim M$.
    The vertices of $K^\circ$ are $V(K^\circ) = \curlies{\pm F^\diamond_1, \pm F^\diamond_2, \dots, \pm F^\diamond_k}$,
    where ${(\cdot)}^\diamond$ is the third notion of duality mentioned in \cref{sec:duality}.
    Define $W$ to be the affine space obtained by intersecting all facets of $K^\circ$ which include $F_1^\diamond$
    and at least $n$ other vertices, namely,
    \[
        W = \bigcap \Set{ \aff{K^\circ[F]} | \substack{ F \in {L(K^\circ)}^{n-1} \\ \size{F} > n \\ 1 \in F }}.
    \]
    Since the face lattices of $K$ and $K^\circ$ are opposites, any vertex $v \in V(F_1)$ corresponds to facet $v^\diamond \in F(K^\circ)$ such that $F_1^\diamond \in V(v^\diamond)$.
    Additionally, $\deg_K(v) > n$ if and only if $v$ is contained in more than $n$ facets, which is if and only if $v^\diamond$ has more than $n$ vertices.
    Therefore,
    \begin{align*}
        M^\circ =& {\aff{\curlies{v \in V(F_1) | \deg_K(v) > n}}}^\circ \\
        =& \bigcap \Set{v^\circ | \substack{v \in V(F_1) \\ \deg_K(v) > n}} \\
        \overset{F = v^\diamond}{=}& \bigcap \Set{\aff{K^\circ[F]} | \substack{F \in {L(K^\circ)}^{n-1} \\ 1 \in F \\ \deg_K(\parens{K^\circ[F]}^\diamond) > n}} \\
        =& \bigcap \Set{ \aff{K^\circ[F]} | \substack{F \in {L(K^\circ)}^{n-1} \\ 1 \in F \\ \size{F} > n}} = W
    \end{align*}
    and so $\dim W = n - \dim M = n - m$.

    Consider $K^\circ$ as the convex hull of its vertices
    \[
        K^\circ = \Conv{\pm F_1^\diamond, \pm F_2^\diamond, \dots, \pm F_k^\diamond}.
    \]
    By the definition of $W$, replacing the two vertices $\pm F_1^\diamond$ with any other pair of vertices $\pm w$ such that $w \in W$,
    satisfies requirement (\ref{eqn:dimension-requirement}) of \cref{thm:polytope-perturbations}.
    This is because any facet that is an $(n-1)$-dimensional simplex will always remain at most $(n-1)$-dimensional when changing the locations of its vertices,
    while every other facet remains $(n-1)$-dimensional due to the definition of $W$.
    Therefore there is $U \subset W$ with $F_1^\diamond \in U$ such that $U$ is open relative to $W$, and for any $u \in U$,
    \[
        L\parens{\Conv{\parens{V(K^\circ) \setminus \curlies{\pm F_1^\diamond}} \cup \curlies{\pm u}}} = L(K^\circ).
    \]

    For any $u \in U$, denote
    \begin{align*}
        K^\circ_u &\defeq \Conv{\parens{V(K^\circ) \setminus \curlies{\pm F_1^\diamond}} \cup \curlies{\pm u}} \\
        K_u &\defeq {(K^\circ_u)}^\circ = \bigcap_{i=2}^k (H^-_i \cap (-H^-_i)) \cap {(u^\circ)}^- \cap {(-u^\circ)}^-
    \end{align*}
    where ${(u^\circ)}^-$ is the halfspace on the side of $u^\circ$ containing $0$.
    Now,
    \[
        L(K_u) = L({(K^\circ_u)}^\circ) = {L(K^\circ_u)}^{op} = {L(K^\circ)}^{op} = {({L(K)}^{op})}^{op} = L(K),
    \]
    therefore $K_u$ and $K$ share the same face lattice.

    By \cref{cor:symmetric-single-vertex-movement-implies-affine-volume}, the mapping $U \rightarrow \R$, defined as
    \[
        u \mapsto \vol{K^\circ_u}
    \]
    is an affine function, and so there is an affine space $N$ with $F_1^\diamond \in N \subset W$
    of dimension $\dim N = \dim W - 1$
    such that for all  $u \in U \cap N$,
    \begin{gather}
        \vol{K^\circ_u} = \vol{K^\circ}. \label{eqn:dual-of-constant-volume}
    \end{gather}
    The dimension of $N$ can be expressed as
    \[
        \dim N = \dim W - 1 = n - m - 1.
    \]
    Since $K$ is a local minimizer of $\mahlervol{K}$ (in the family of $n$-dimensional symmetric polytopes with at most $2k$ facets),
    there is a neighborhood $F_1^\diamond \in U' \subset U$, such that for every $u \in U'$,
    \[
        \mahlervol{K} \leq \mahlervol{K_u}.
    \]
    Together with (\ref{eqn:dual-of-constant-volume}), this implies that for any $u \in U' \cap N$,
    \begin{gather}
        \vol{K} \leq \vol{K_u}. \label{eqn:volume-of-polytope-is-minimal}
    \end{gather}
    There is $U'' \subset U$ with $F_1^\diamond \in U''$
    such that for $u \in U''$, the facet $\partial{K_\eps} \cap (H_1^- + \eps)$ of $K_\eps$ is contained in ${(u^\circ)}^+$.
    Thus for all $u \in U''$,
    \[
        K_u = K_\eps \cap \parens{u^\circ}^-.
    \]
    Denote $S = N \cap U''$. Then $S$ is a relative neighborhood of $F_1^\diamond$ in $N$.
    Due to (\ref{eqn:volume-of-polytope-is-minimal}), for any $u \in S$,
    \begin{gather}
        \vol{K} \leq \vol{K_\eps \cap \parens{u^\circ}^-} \label{eqn:floating-body-strictness}
    \end{gather}
    with equality being achieved at $u = F_1^\diamond$. Note that this might not be the unique point of equality.

    \begin{claim*}
        Every $u \in S$ satisfies $\zeta \in u^\circ$.
    \end{claim*}

    \begin{claimproof}
        Otherwise, let $u_0 \in S$ such that $\zeta \notin u_0^\circ$.
        Then either
        \begin{enumerate}
            \item $\zeta \in \interior{\parens{u_0^\circ}^-}$ or
            \item $\zeta \in \interior{\parens{u_0^\circ}^+}$.
        \end{enumerate}
        In the first case, since $\zeta \in {(K_\eps)}_{[\delta]}$,
        \[
            ({u_0}^\circ) \cap \interior{{(K_\eps)}_{[\delta]}} \ne \emptyset.
        \]
        By the definition of ${(K_\eps)}_{[\delta]}$, this means $\vol{K_{u_0}} < \vol{K}$, contradicting (\ref{eqn:floating-body-strictness}).
        In the second case, consider the line segment $[F_1^\diamond, u_0]$.
        When we move $u$ from $u_0$ to $F_1^\diamond$ along this line segment,
        the halfspace $\parens{u^\circ}^+$ initially contains $\zeta$, and rotates linearly, until at the end of the segment, $\zeta$ is at its boundary.
        Thus, for $u_1$ very close to $F_1^\diamond$ on the other side of the line segment,
        \[
            \zeta \in \interior{\parens{u_1^\circ}^-}.
        \]
        Since $S$ is a relative neighborhood of $F_1^\diamond$ in $N$, there is such a $u_1 \in S$,
        and by setting $u_0$ to be $u_1$ we may assume without loss of generality that $u_0$ belongs to the first case, which has already been proved.
    \end{claimproof}

    Finally, we claim that for every $u \in S$, the hyperplane $u^\circ$ supports $\parens{K_\eps}_{[\delta]}$.
    Otherwise, since $\zeta \in u^\circ \cap \parens{K_\eps}_{[\delta]}$,
    \[
        ({u}^\circ) \cap \interior{{(K_\eps)}_{[\delta]}} \ne \emptyset,
    \]
    which again by the definition of ${(K_\eps)}_{[\delta]}$ means $\vol{K_u} < \vol{K}$, contradicting (\ref{eqn:floating-body-strictness}).
    
    The dimension of $S$ is $\dim N = n - m - 1$, and for every $u \in S$, $u^\circ$ is a supporting hyperplane of $\parens{K_\eps}_{[\delta]}$ at $\zeta$,
    making $\zeta$ a singularity of $\parens{K_\eps}_{[\delta]}$ of dimension at least $n - m - 1$, as required.
\end{proof}

\section{Proofs of Theorems~\ref{thm:main-mahler} and~\ref{thm:main-floating-body}}\label{sec:main-results}
A key ingredient in the proof of \cref{thm:main-floating-body} will be~\cite[Proposition 7]{MR2008}.
This is a statement on shadow systems. For literature on the topic, see for example~\cite[Section 10.4]{Schneider2013}.
The proposition is as follows:

\begin{proposition}[\text{\cite[Proposition 7]{MR2008}}]\label{prop:shadow-system-equality-case}
    Let $K_t$, $t \in [a, b]$, be a non-degenerate shadow system in $\R^d$.
    Then the following are equivalent:
    \begin{enumerate}
        \item $t \mapsto \vol{K_t}$ and $t \mapsto \vol{K_t^\star}^{-1}$ are both affine functions of $t$.
        \item There exists a linear representation $\R^d \cong \R^{d-1} \times \R$,
            real numbers $v, u \in \R$, and a vector $V \in \R^{d-1}$ such that the affine transformations $A_t: \R^d \rightarrow \R^d$ defined by
            \[
                A_t(X, x) = (X, x + \parens{t - \frac{a + b}{2}}\parens{vx + \triangles{V, X} + u})
            \]
            satisfy $A_t(K_{(a + b)/2}) = K_t$ for every $t \in [a, b]$.
    \end{enumerate}
\end{proposition}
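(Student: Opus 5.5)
We take the implication \textit{(2)}$\Rightarrow$\textit{(1)} first, since it is a direct computation. We then prove \textit{(1)}$\Rightarrow$\textit{(2)} by tracing the equality cases in the Campi--Gronchi convexity argument for shadow systems. Throughout, write $c=\frac{a+b}{2}$, and write the shadow system in the form
\[
K_t=\Set{x+t\alpha(x)e_d | x\in K_0'},
\]
where $K_0'$ is a compact set, $\alpha$ is the speed function, and $e_d$ is the direction of movement. We interpret $K_t^\star$ as the polar of $K_t$ taken with respect to its Santal\'o point, so that $\vol{K_t}\cdot\vol{K_t^\star}=\Mahlervol{K_t}$.

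For \textit{(2)}$\Rightarrow$\textit{(1)}: the map $A_t$ fixes the first $d-1$ coordinates. Its last coordinate is $x\parens{1+(t-c)v}+(t-c)\parens{\triangles{V,X}+u}$. Hence $A_t$ is affine with Jacobian $\det A_t=1+(t-c)v$, which is affine in $t$. It follows that $\vol{K_t}=(1+(t-c)v)\vol{K_c}$ is affine in $t$. The volume product $\Mahlervol{\cdot}$ is affine invariant, so $\Mahlervol{K_t}=\Mahlervol{K_c}$. Therefore
\[
\vol{K_t^\star}^{-1}=\frac{\vol{K_t}}{\Mahlervol{K_c}},
\]
which is again affine in $t$. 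Non-degeneracy of the shadow system guarantees $1+(t-c)v>0$ on $[a,b]$, so $A_t$ is invertible and nothing degenerates.

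For \textit{(1)}$\Rightarrow$\textit{(2)}, we use the Campi--Gronchi theorem: for a shadow system, $t\mapsto\vol{(K_t-z(t))^\circ}^{-1}$ is convex whenever the centre $z(t)$ moves affinely along $e_d$. Its proof writes the polar body by slicing along lines parallel to $e_d$ in the dual space. Concretely, $(K_t-z)^\circ$ is recovered from the support functions of the projections of $K_t$, and for fixed $y'\in\R^{d-1}$ those support functions are jointly convex in $(t,y_d)$. Integrating, and applying a Busemann/Brunn--Minkowski type inequality, gives the convexity.

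Assume \textit{(1)} holds. The minimum over $z$ of the volume of the polar with respect to $z$ is attained at the Santal\'o point, so $\vol{K_t^\star}^{-1}$ is a supremum of the convex functions above. Fix $t_0\in(a,b)$, choose $z(t)$ to be the affine function of $t$ that equals the Santal\'o point of $K_{t_0}$ at $t=t_0$, and let $g(t)=\vol{(K_t-z(t))^\circ}^{-1}$. Then $g$ is convex, $g\le\vol{K_t^\star}^{-1}$, and $g(t_0)=\vol{K_{t_0}^\star}^{-1}$. Since the right-hand side is affine, $g$ is a convex function lying below an affine function and touching it at an interior point. Hence $g$ coincides with that affine function, i.e.\ $g$ is affine on $[a,b]$.

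We then run backwards through each inequality used in the Campi--Gronchi proof and demand equality at each stage. Equality in the one-dimensional convexity of each fibre, for almost every direction, should force the support function of the projection of $K_t$ in direction $y$ to be affine in $t$ for all $y$. Translated back, this says that the speed $\alpha$ agrees on the boundary of $K_0'$ with an affine function $\beta(X,x)=vx+\triangles{V,X}+u$. With $K_t=\Set{x+t\beta(x)e_d | x\in K_0'}$, the body $K_t$ is the image of $K_0'$ under the affine maps $(X,x)\mapsto(X,x+t\beta)$. Re-centring the parameter at $t=c$ then gives exactly the maps $A_t$ in \textit{(2)}. The affinity of $\vol{K_t}$ is used here to confirm that the Jacobian is consistent, i.e.\ that $\beta$ has no hidden nonlinear part on interior points. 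This is harmless anyway, since the shadow only depends on $\alpha$ restricted to extreme points.

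The main obstacle is the equality step: showing that equality in the integrated inequality forces fibrewise equality for \emph{every} $y$, not just almost every $y$. It must also force the boundary points of $K_t$ to move along an affine rather than merely piecewise affine law. The first attempt will be to use continuity of support functions in $y$ to upgrade ``almost every'' to ``every''. To rule out the piecewise affine case, we will use strict convexity of $s\mapsto s^{-d}$: it makes any kink in $t\mapsto h_{K_t}(y)$ produce strict convexity of $g$, contradicting that $g$ is affine.
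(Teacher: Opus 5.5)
Your direction (2)$\Rightarrow$(1) is correct. $\det A_t=1+(t-c)v$ is nonzero on $[a,b]$ by non-degeneracy and equals $1$ at $c$, so $\mathrm{Vol}(K_t)$ is affine. Affine invariance of $\mathcal{P}$ then gives $\mathrm{Vol}(K_t^\star)^{-1}=\mathrm{Vol}(K_t)/\mathcal{P}(K_c)$. The paper does not prove this proposition; it imports it from Meyer--Reisner, so the converse has to stand on its own.

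The direction (1)$\Rightarrow$(2) starts from a false lemma. Convexity of $t\mapsto\mathrm{Vol}((K_t-z(t))^\circ)^{-1}$ is Campi--Gronchi's theorem for \emph{origin-symmetric} shadow systems with $z\equiv 0$, and it fails for a general centre. In $\R^1$, $K_t=[-1+t,1+t]$ with $z\equiv0$ gives $\mathrm{Vol}(K_t^\circ)^{-1}=(1-t^2)/2$, which is strictly concave. Taking $z(t)=\beta t$ only replaces $t$ by $(1-\beta)t$, and a product with $[-1,1]^{d-1}$ gives the same in any dimension. Write $\mathrm{Vol}((K_t-z)^\circ)=\frac1d\int_{\R^{d-1}}\bigl(h_{K_t-z}(W,1)^{-d}+h_{K_t-z}(-W,-1)^{-d}\bigr)\,dW$. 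For fixed $z$, each half has a convex reciprocal: apply Borell--Brascamp--Lieb to the jointly convex function $(t,W)\mapsto h_{K_t-z}(\pm W,\pm1)$. It is this function, not $(t,y_d)\mapsto h_{K_t}(y',y_d)$, that is jointly convex. The sum of the two halves, however, does not have a convex reciprocal.

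Your own argument exposes the problem. The slope $\beta$ of $z(t)$ was arbitrary, so the touching step would give $g_\beta=\ell$ near $t_0$ for every $\beta$. Uniqueness of the Santal\'o point would then force $s(K_t)=s(K_{t_0})+(t-t_0)\beta e_d$ for all $\beta$ at once. So (1) could never hold, contradicting (2)$\Rightarrow$(1) already for translates $K_t=K+te_d$. Hence ``$\mathrm{Vol}(K_t^\star)^{-1}$ is a supremum of convex functions'' is not available, and the non-symmetric case needs a genuinely different argument (Meyer--Reisner's). For symmetric systems, the only case this paper uses, $z\equiv0$ is legitimate and (1) just says the Campi--Gronchi inequality is an equality.

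That makes the equality analysis the whole content, and here your proposal aims at the wrong target. Condition (2) does \emph{not} make $t\mapsto h_{K_t}(y)$ affine or kink-free. Take $K_c=[-1,1]^2$ and $A_t(X,x)=(X,x+(t-c)X)$. Then $h_{K_t}(y)=|y_1+(t-c)y_2|+|y_2|$ has a kink at $t=c-y_1/y_2$, while $\mathrm{Vol}(K_t)$ and $\mathrm{Vol}(K_t^\circ)$ are constant. So neither upgrading ``almost every $y$'' to ``every $y$'' nor strict convexity of $s\mapsto s^{-d}$ can lead to (2). The convexity is not produced fibre by fibre in $t$; it comes from a $(d-1)$-dimensional marginal inequality. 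What must be shown is that equality in that marginal inequality, applied to $\phi(t,W)=h_{K_t}(W,1)$ (and likewise to $h_{K_t}(-W,-1)$), forces the fibres to differ by an affine change of variables:
\[
h_{K_t}(W,1)=\bigl(1+(t-c)v\bigr)\,h_{K_c}\Bigl(\frac{W+(t-c)V}{1+(t-c)v},1\Bigr)+(t-c)u .
\]
This is exactly the support-function form of $K_t=A_t(K_c)$. Your proposal gives no argument for this step, and it is the heart of the proposition.
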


The operation $K^\star$ is defined as ${(K-s(K))}^\circ$, where $s(K)$ is the Santaló point of $K$.
We will only be using this statement when $K$ is symmetric, in which case $s(K) = 0$ and $K^\star = K^\circ$.
With this, we can prove our main results, \cref{thm:main-floating-body} and \cref{thm:main-mahler}.

\begin{proof}[Proof of \cref{thm:main-floating-body}]
    As in the statement of the theorem, let $K \subset \R^n$ be a symmetric polytope, let $0 < \delta < \frac{1}{2}$,
    let $K_{[\delta]}$ be its floating body,
    and let $x \in \partial{K_{[\delta]}}$ be a point at which $K_{[\delta]}$ admits at least two different supporting hyperplanes, $H \ne H_1$.
    Define $\gamma: [0, 1] \to \R^n$ by
    \[
        \gamma(t) = (1 - t) H^\circ + t H_1^\circ.
    \]
    Then $\gamma$ is a linear function onto the line segment $[H^\circ, H_1^\circ]$.

    For $t \in [0, 1]$, consider the shadow system
    \[
        P_t = \Conv{K^\circ \cup \curlies{\pm \gamma(t)}}
    \]
    whose dual bodies are given by
    \[
        P_t^\circ = K \cap \parens{{\gamma(t)}^\circ}^- \cap \parens{{-\gamma(t)}^\circ}^-.
    \]

    Each $P_t^\circ$ is the intersection of $K$ with two halfspaces.
    At points $t=0$ and $t=1$, these halfspaces are $\pm H$ and $\pm H_1$ respectively,
    and thus for every $t$, the boundaries of these halfspaces are supporting hyperplanes of $K_{[\delta]}$.
    Thus, the volume of $P_t^\circ$ is constant in $t$.

    As $t$ varies, so does $L(P_t)$. There is a finite number of options for $L(P_t)$.
    Denote these options by $L_1, L_2, \dots, L_m$.
    By \cref{cor:symmetric-single-vertex-movement-implies-affine-volume},
    for each $L_i$ there is an affine function $A_i: [0, 1] \to \R$ such that $A_i(t) = \vol{P_t}$ whenever $L(P_t) = L_i$.
    Since $\vol{P_t}$ is continuous, and always equal to one of $A_1(t), A_2(t), \dots, A_m(t)$,
    there is $r \in \lparen0, 1\rbrack$ such that $\vol{P_t}$ is affine on $[0, r]$.
    On the restriction of the shadow system $P_t$ from $[0, 1]$ to $[0, r]$, we can apply \cref{prop:shadow-system-equality-case}
    to get that $P_t$ is an affine image of $P_{r/2}$ for every $t \in [0, r]$.

    Other than $\pm \gamma(t)$, the vertices of $P_t$ do not change.
    As $t$ varies, all bodies $P_t$ are affine images of each other.
    Since they are polytopes, the affine transformations must map their vertices to each other.
    Therefore by continuity, these affine transformations fix all vertices of $P_t$ other than $\pm \gamma(t)$,
    and in particular, these fixed vertices must lie on a single hyperplane $N$.

    Therefore for all $t \in [0, r]$,
    all the facets of $P_t^\circ$ other than $\parens{{\gamma(t)}^\circ}$ and $\parens{{-\gamma(t)}^\circ}$ are parallel to $N^\circ$.

    In particular, this is true at $t = 0$, where $P_0^\circ = K \cap \Conv{H, -H}$ and ${\gamma(t)}^\circ = H$.
    Therefore all the facets of $K \cap \Conv{H, -H}$ other than $H$ and $-H$ are parallel to $N^\circ$.

    This means that $K \cap \Conv{H, -H}$ is a skewed prism with $\pm H$ as its bases, and the direction of the prism is $N^\circ$, as required.
\end{proof}

Now that we have proved \cref{thm:main-floating-body}, we are ready to prove \cref{thm:main-mahler}.

\begin{proof}[Proof of \cref{thm:main-mahler}]
    The proof will be by induction on $n$. When $n = 1$, the theorem is trivial, as the only polytopes in $\R$ are line segments.

    Let $n > 1$ and assume the theorem holds for $n - 1$.

    Let $K$ be a local minimizer as in the statement of the theorem.
    Assume there is a facet $F \in F(K)$ that contains at most $n - 2$ vertices of degree at least $n + 1$.
    
    Let $\eps > 0$. As in \cref{lem:minimum-implies-floating-singularity}, define $K_\eps$, $\delta > 0$, and $m$. Then
    \[
        m = \dim \aff{\curlies{v \in V(F) | \deg(v) > n}} \le (n - 3) + 1 = n - 2
    \]
    where the inequality is true by our assumption that $F$ contains at most $n - 2$ vertices of degree at least $n + 1$,
    and the bound $\dim \aff{X} \le \size{X} - 1$ for any finite set $X$.

    Then by \cref{lem:minimum-implies-floating-singularity}, $F \cap \parens{K_\eps}_{[\delta]}$ is a single point, which is a singularity of $\parens{K_\eps}_{[\delta]}$
    of dimension at least $n - m - 1 \ge n - (n - 2) - 1 = 1$.

    Since the dimension of the singularity is at least $1$, by \cref{thm:main-floating-body}
    the polytope $K_\eps \cap H_1^- \cap {(-H_1)}^- = K$ is a skewed prism with $\pm F_1$ as its bases.

    Therefore there exists an $(n-1)$-dimensional symmetric polytope $L$ and a linear transformation $T: \R^n \rightarrow \R^n$ such that
    \[
        K = T(L \times [-1, 1]).
    \]

    Since $K$ is assumed to be a local minimizer for the Mahler product in the family of $n$-dimensional symmetric polytopes with at most $l$ vertices,
    $L$ must be a local minimizer for the Mahler product in the family of $(n-1)$-dimensional symmetric polytopes with at most $l/2$ vertices.
    Otherwise, there would be a polytope $L'$ arbitrarily close to $L$ with at most $l/2$ vertices and with smaller Mahler product than $L$,
    and then $T(L' \times [-1, 1])$ would be arbitrarily close to $K$, with at most $l$ vertices and smaller Mahler product than $K$,
    contradicting the assumption that $K$ is a local minimizer.

    Every $v \in V(L)$ corresponds to a vertex $w = T((v, 1)) \in V(F) \subset V(K)$.
    They satisfy $\deg_L(v) = \deg_K(w) - 1$.
    There are at most $n - 2$ vertices $w \in V(F)$ such that $\deg_K(w) \ge n + 1$,
    and so there are at most $n - 2$ vertices $v \in V(L)$ such that $\deg_L(v) \ge n$.
    In particular, $L$ has a facet $L_0 \in F(L)$ with at most $n - 3$ vertices $v \in V(L_0)$ such that $\deg_L(v) \ge n$.
    Such an $L_0$ can be chosen by picking a vertex $v_0 \in V(L)$ with $\deg_L(v_0) \ge n$
    (if there is no such vertex, any facet can be chosen as $L_0$),
    and then letting $L_0$ be any facet that does \textbf{not} contain $v_0$
    (at least one such facet exists).
    By the induction hypothesis, $L$ is a linear transformation of the unit cube in $\R^{n-1}$,
    and therefore $K$ is a linear transformation of the unit cube in $\R^n$, as required.
\end{proof}

Note that it is not important that the minimizer in \cref{thm:main-mahler} be defined as a local minimizer among polytopes with at most $l$ \textit{vertices}.
We could instead have it be a minimizer among polytopes with at most $l$ \textit{facets}, and the proof would be identical.
The only difference in the proof would be stating that $L$ is a local minimizer among $(n-1)$-dimensional symmetric polytopes with at most $l - 2$ facets, instead of $l / 2$ vertices.
Thus \cref{thm:main-mahler} can be combined with its dual statement.

\begin{theorem}
    Let $l \in \N$, and let $K \subset \R^n$ be a symmetric polytope that is a local minimizer of the Mahler product among all symmetric polytopes in $\R^n$ with at most $l$ vertices.
    Then one of three cases occurs:
    
    \begin{enumerate}[label=\textit{(\roman*)}]
        \item Every vertex of $K$ is contained in at least $n - 1$ facets that are not simplices,
        and every facet of $K$ contains at least $n - 1$ vertices of degree at least $n + 1$.
        \item $K$ is a linear transformation of the unit cube, $K = T\parens{{[-1, 1]}^n}$.
        \item $K$ is a linear transformation of the cross-polytope, $K = T\parens{\parens{{[-1, 1]}^n}^\circ}$.
    \end{enumerate}
\end{theorem}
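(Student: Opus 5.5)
The plan is to combine \cref{thm:main-mahler} with its dual, using that the Mahler product of a symmetric body is invariant under polarity: $\mahlervol{K} = \mahlervol{K^\circ}$. Let $K$ be a local minimizer among symmetric polytopes in $\R^n$ with at most $l$ vertices. \Cref{thm:main-mahler} applies to $K$ directly. So either $K$ is a linear image of the cube, which is case \textit{(ii)}, or every facet of $K$ contains at least $n-1$ vertices of degree at least $n+1$. The second alternative is the second half of case \textit{(i)}.

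Next I pass to $K^\circ$. Polarity is a homeomorphism, in the Hausdorff metric, on symmetric convex bodies with $0$ in the interior, and it exchanges vertices and facets. Hence $K^\circ$ is a local minimizer of the Mahler product among symmetric polytopes with at most $l$ facets. By the remark following the proof of \cref{thm:main-mahler}, the same argument works for the class ``at most $l$ facets''; the induction step only needs that prisms over a local minimizer $L$ with at most $l-2$ facets stay in the class. Applying this facet version to $K^\circ$ gives two possibilities. Either $K^\circ = T([-1,1]^n)$, so that $K = (T^{-1})^{*}\parens{([-1,1]^n)^\circ}$ is a linear image of the cross-polytope, which is case \textit{(iii)}. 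Or every facet of $K^\circ$ contains at least $n-1$ vertices of degree at least $n+1$ in $K^\circ$.

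It remains to translate the last condition back to $K$ through the face-lattice anti-isomorphism $L(K^\circ)\cong L(K)^{op}$ of \cref{sec:duality}. A facet $G$ of $K^\circ$ is $v^\diamond$ for a unique vertex $v$ of $K$. A vertex $w$ of $G$ is $F^\diamond$ for a facet $F \ni v$ of $K$. The degree of $w$ in the $1$-skeleton of $K^\circ$ equals the number of edges of $K^\circ$ at $w$. These edges correspond to the ridges of $K$ contained in $F$, that is, to the facets of $F$. Now a polytope of dimension $n-1$ has at least $n$ facets, with equality exactly when it is a simplex. So $\deg_{K^\circ}(w) \ge n+1$ holds if and only if $F$ is not a simplex. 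The dual condition therefore reads: every vertex $v$ of $K$ lies in at least $n-1$ facets that are not simplices, which is the first half of case \textit{(i)}.

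Finally, the cases combine as follows. If neither the cube nor the cross-polytope alternative occurs, both halves of \textit{(i)} hold. If either occurs, we are in \textit{(ii)} or \textit{(iii)}. The step needing the most care is the reduction to the facet version of \cref{thm:main-mahler}. I would re-check its induction. The application of \cref{lem:minimum-implies-floating-singularity} is already phrased in terms of at most $2k$ facets, so it fits directly. The inheritance step needs that $L'\times[-1,1]$ has $f(L')+2$ facets, which keeps it within the class.
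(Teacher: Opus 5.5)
Your proposal is correct and follows the paper's own route. You apply \cref{thm:main-mahler} to $K$, pass to $K^\circ$ (a local minimizer among symmetric polytopes with at most $l$ facets, since polarity is a homeomorphism that preserves the Mahler product), invoke the facet-count version that the paper asserts in its closing remark, and dualize back. Your explicit translation, in which the edges of $K^\circ$ at $F^\diamond$ correspond to the facets of $F$ so that $\deg_{K^\circ}(F^\diamond)\ge n+1$ exactly when $F$ is not a simplex, correctly fills in the step the paper leaves implicit.
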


\bibliographystyle{plain}
\bibliography{ref}

\end{document}